\newcommand{\R}{\mathbb{R}}
\renewcommand{\L}{\mathbb{L}}
\newcommand{\ep}{\varepsilon}
\newcommand{\dt}{\partial_t}
\newcommand{\grad}{\nabla} 
\newcommand{\produ}[1]{\left\langle #1\right\rangle}
\newcommand{\nab}{\bar\nabla}
\newcommand{\SOf}{SO^{\uparrow}(n-1,1)}
\newtheorem{theorem}{Theorem}[section]
\newtheorem{lemma}{Lemma}[section]
\newtheorem{proposition}{Proposition}[section]
\newtheorem{corollary}{Corollary}[section]
\newtheorem{definition}{Definition}[section]
\newtheorem{examp}{Example}[section]
\newtheorem{remark}{Remark}[section]
\title{Translating Solitons in a Lorentzian Setting, Submersions and Cohomogeneity One Actions}
\author{
Marie-Am\'elie Lawn \\
\small Department of Mathematics, Faculty of Natural Sciences, \\
\small Imperial College London, 180 Queen‘s gate London SW72AZ (UK)\\
\small m.lawn@imperial.ac.uk\\
Miguel Ortega\\
\small Department of Geometry and Topology, Institute of Mathematics \\
\small Universidad de Granada, 18071 Granada (Spain)\\
\small miortega@ugr.es, ORCID 0000-0002-1390-9980
}
\date{\today}
\begin{document}
\maketitle
\begin{abstract} 
We study new examples of translating solitons of the mean curvature flow, especially in Minkowski space. We consider for this purpose manifolds admitting submersions and cohomegeneity one actions by isometries on suitable open subsets. This general setting also covers the classical Euclidean examples. As an application, we completely classify timelike, invariant translating solitons by rotations and boosts in Minkowski space. 
\end{abstract}

\noindent \textit{Keywords:} Timelike Translating soliton, submersions, pseudo-Riemannian manifolds, Lie group, Cohomogeneity one action.

\noindent \textit{MSC[2010] Classification:} 53C44, 53C21, 53C42, 53C50.
\section{Introduction}

The evolution by mean curvature flow is classicaly studied for hypersurfaces in the Euclidean space 
$\mathbb{R} ^{n+1}$. One of the approaches is to consider particular solutions, the so-called translating solitons or translators, which are invariant by a subgroup of translations of the ambient space. It is well-known that they admit a forever flow, characterized by a constant (unit) vector $v\in\mathbb{R}^{n+1}$, $n\geq 1$, in such a way that the mean curvature flow equation simplifies to 
\[H=v^{\perp}.\]
Translators have been widely studied in literature (\cite{AW, CSS,MSHS, LTW},\dots, and references therein). For instance, they naturally appear in the study of solutions of the mean curvature flow with a certain type of singularities (see for example \cite{HS}) and are equivalent to minimal surfaces for a conformally modified metric \cite{Il}. There are other studies for translators in other ambients spaces, such as $\R\times M$ \cite{LM}, in $\mathbb{H}^2\times\mathbb{R}$ \cite{Bueno}, a solvable group \cite{Pipoli}, the Heisenberg 3-group \cite{Pipoli2},  etc.

One of the first analytical point of view was to see which of them were rotationally invariant \cite{CSS}. This approach can be revisited from a geometrical point of view, in the sense that they are invariant by the Lie group $SO(n-1)$. Thus, the PDE is rapidly reduced to an ODE, which is simpler to study. Translating solitons were studied from an analytical point of view in the Lorentz-Minkowski space, \cite{coreano}, focusing on the rotationally invariant case and only for spacelike hypersurfaces. Again, the underlying geometrical structure is the use of the Lie group $SO(n)$ as a subgroup of isometries. 

In this paper, we are interested in studying graphical translating solitons in Minkowski space  by the action of some subgroups of isometries. From a pure analytical point of view, we seek to simplify the PDE to an ODE by a group of transformations. However, adding a geometrical sauce  provides an extra layer of flavour. To do so, we wished to use quotients, although we realized that suitable tools are submersions, for almost the same effort. Those who wish to stick to the analytical taste can jump directly to Sections \ref{Rotational} and \ref{SOf}.

Typical examples are cohomogeneity one actions by Lie groups. If the action of  a group $\Sigma$ is proper and free, the orbit space becomes a smooth manifold (with boundary), and the natural projection is a submersion. This holds when  $M$ is Riemannian and $\Sigma$ is compact, acting by isometries. 
However,  the situation becomes much more complicated when the action is not proper.
Fortunately,  it is still possible to work on some good open subsets $\Omega$ of $M$ quite often, because the orbit space $\Omega/\Sigma$ is diffeomorphic to an open interval, even when $M/\Sigma$ is not  Hausdorff.

Since our starting point was to study translators in Lorentz-Minkowski space, the base manifolds of the quotients have to be either Riemannian or Lorentzian. To unify the notations, we are forced to consider  pseudo-Riemannian submersions. The advantage is that we can make a quite more general study, although we will return to our origins in the last sections.

We will mainly focus on \textit{graphical} translating solitons. Namely, given $u\in C^2(M)$, we construct its graph map $F:M\rightarrow M\times\R$, $F(x)=(x,u(x))$, and we assume that its induced metric is not degenerate.  Let $\nu$ be the upward normal vector along $F$ with $\ep=\produ{\nu,\nu}=\pm 1$. We recall that $\ep=-1$ when $F(M)$ is spacelike, and $\ep=+1$ if timelike. In Section \ref{setup}, we show  that the mean curvature flow is characterized by the following partial differential equation, which is similar to the one obtained in the classical case (see \cite{CSS})
\[
\mathrm{div}\left(\frac{ \grad u } { \sqrt{\ep\big(\ep'+\vert \grad u\vert_g^2\big)}} \right)=
\frac{1}{ \sqrt{\ep\big(\ep'+\vert \grad u\vert_g^2\big)}} =  H. 
\]
As in \cite{Bueno,CSS,LM}, we show  in Section \ref{subslie} that it is possible to simplify this PDE to an ODE under reasonable  assumptions on the geometry of the manifold $M$, namely when $M$ admits a submersion $\pi:(M,g)\rightarrow (I,\tilde\ep ds^2)$ whose fibers $\pi^{-1}(s)$ have constant mean curvature $h(s)$, where $I$ is an interval, $\tilde{\ep}=\pm 1$. In Theorem \ref{equiv_ODE}, we prove that a function $f\in C^2(M)$ is solution to the ODE
\[f''(s)=(\tilde{\ep}+\ep' f'(s)^2)(1-f'(s)h(s)),\] 
if, and only if, the graph map $F:M\to M\times\R$, $F(x)=(x,(f\circ\pi)(x))$, is a translating soliton. 
Next, we will give some results about the behavior of translating solitons within this context.  Corollary \ref{oide} generalizes the well-known Wing-like translating soliton in $\mathbb{R}^{m+1}$, \cite{CSS}, whose tangent planes at some points are vertical, and thus, they are not graphical at such points. The standard idea is to rewrite the ODE by putting $f$ as the inverse function of a given $\alpha$. Corollary \ref{global-solutions} is an existence result for globally defined graphical solutions.

Section \ref{soluODE} is devoted to analytical computations, which are necessary for the final sections.  

Section \ref{Rotational} is devoted to applying all these ideas to rotationally invariant translating solitons in Minkowski space $\L^{n+1}$, $n\geq 2$, that is to say, those invariant by the action of $SO(n)$ as a subgroup of isometries of $\L^{n+1}$. Also, in Section \ref{SOf}, we consider the Lie group $SO^{\uparrow}(n-1,1)$ acting on $\L^{n+1}$, $n\geq 2$. We focus on the timelike cases, because the spacelike cases were already considered in \cite{coreano} (although without the Geometrical topping). Probably, the most surprising result is the fact that there is an example which crosses more than one fundamental region. Or in other words, we need more than one profile curve to obtain the whole surface, by gluing up four pieces lying in continuous open subsets. Probably, this unusual and slippery technique seems to prevent many people from studying timelike examples, but makes the Lorentzian setting surprisingly different from the classical Euclidean case. 

\color{black}
\section{Setup}\label{setup}

Let $(M,g)$ be a connected\, pseudo-Riemannian manifold. Consider $u\in C^2(M,\R)$, and let $F:M\rightarrow M\times\R=:\bar{M}$, $F(p)=(p,u(p))$\, be  its graph map. We denote $(p,t)\in M\times\R$. We denote by $\vert w\vert_g^2={g(w,w)}$ the squared $g$-norm of any tangent vector $w$ to $M$. Assume that $F:(M,\gamma=F^*\produ{,})\rightarrow (\bar{M},\produ{,}=g+\ep'dt^2)$ is a non-degenerate hypersurface, where $\ep'=\pm 1$.  Under the usual identifications, for each $X\in TM$, we have
\[ dF(X) = (X,du(X)) = (X,g(\grad  u,X)), \]
where $\grad  u$ is the $g$-gradient of $u$.  We consider the metric $\gamma=F^*\produ{,}$ on $M$. By assumption, $\gamma$ is non-degenerate. The upward  normal vector field is
\begin{equation}\label{normal2} 
\nu = \frac{1}{W}(-\ep'\grad u,1), \quad W=+\sqrt{\ep\big(\ep'+\vert\grad u\vert_g^2\big)},
\end{equation}
where we are assuming that $\ep:=\mathrm{sign}\big(\ep'+\vert\grad u\vert_g^2\big)=\pm 1$ is a constant function on the whole $M$.
Note that $\produ{\nu,\nu}=\varepsilon$. 
It is important to remark our definitions of the mean curvature vector and function. In our setting, if $II_F$ is the second fundamental form, the mean curvature vector is 
\[ \vec{H}_F=\mathrm{trace}_{g_M}(II_F)=\ep H \nu_F,
\]
where $H$ is the mean curvature function. Similar notations will be used for other hypersurfaces along the paper.

The following proposition is well-known in the Euclidean setting (see for example \cite{LTW}). The proof is similar in the pseudo-Riemannian context, but we include it for completeness and discuss the necessary modifications.
\begin{proposition}\label{basicequation}
Under the previous setting, $F$ is a (vertical) translating soliton  if, and only if, function $u$ satisfies \begin{eqnarray}\label{fund_eq}
\mathrm{div}\left(\frac{ \grad u } { \sqrt{\ep\big(\ep'+\vert \grad u\vert_g^2\big)}} \right)=
\frac{1}{ \sqrt{\ep\big(\ep'+\vert \grad u\vert_g^2\big)}} = H. 
\end{eqnarray}
\end{proposition}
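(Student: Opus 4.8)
The plan is to turn the vectorial soliton equation into a scalar one and then to recognize the resulting scalar mean curvature as the divergence in \eqref{fund_eq}. Since the hypersurface has codimension one and $\produ{\nu,\nu}=\ep$, the translator condition $\vec H_F=v^\perp$ (with $v$ the vertical unit field $\dt$) is equivalent to the equality of the $\nu$-components on both sides. Writing $\vec H_F=\ep H\nu$ and $v^\perp=\tfrac{1}{\ep}\produ{\dt,\nu}\,\nu=\ep\produ{\dt,\nu}\,\nu$, this collapses to the single scalar equation $H=\produ{\dt,\nu}$. From \eqref{normal2} one reads off $\produ{\dt,\nu}=\ep'/W$, so the first step records that the soliton condition is $H=\ep'/W$, the middle term of \eqref{fund_eq} (the sign $\ep'$ being part of the bookkeeping carried throughout).

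The heart of the proof is the explicit computation of $H$ for the graph. I would work in local coordinates $(x^i)$ on $M$, so that $dF(\partial_i)=\partial_i+u_i\dt$ and the induced metric is $\gamma_{ij}=g_{ij}+\ep' u_iu_j$, i.e. $\gamma=g+\ep'\,du\otimes du$. The matrix determinant lemma gives $\det\gamma=(1+\ep'\vert\grad u\vert_g^2)\det g$, and Sherman--Morrison gives the inverse $\gamma^{ij}=g^{ij}-\tfrac{\ep}{W^2}u^iu^j$; the algebraic identity doing the work here is $1+\ep'\vert\grad u\vert_g^2=\ep\ep'W^2$, a direct consequence of $W^2=\ep(\ep'+\vert\grad u\vert_g^2)$. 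Because $\bar M$ is a metric product, the product connection yields $\nab_{dF(\partial_i)}dF(\partial_j)=\Gamma^k_{ij}\partial_k+u_{ij}\dt$, and pairing with $\nu$ reassembles the Christoffel terms into the $g$-Hessian, giving the scalar second fundamental form $\produ{\nab_{dF(\partial_i)}dF(\partial_j),\nu}=\tfrac{\ep'}{W}\nabla^2u(\partial_i,\partial_j)$. Tracing against $\gamma^{ij}$ then produces
\[
H=\frac{\ep'}{W}\,\gamma^{ij}\nabla^2_{ij}u=\frac{\ep'}{W}\Big(\Delta_g u-\frac{\ep}{W^2}\,\nabla^2u(\grad u,\grad u)\Big).
\]

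To finish I would expand the divergence in \eqref{fund_eq}, namely $\mathrm{div}\big(\grad u/W\big)=\tfrac{1}{W}\Delta_g u-\tfrac{1}{W^2}g(\grad W,\grad u)$, and substitute $g(\grad W,\grad u)=\tfrac{\ep}{W}\nabla^2u(\grad u,\grad u)$, which comes from differentiating $W^2=\ep(\ep'+\vert\grad u\vert_g^2)$. Comparing with the displayed formula for $H$ shows $\ep'\,\mathrm{div}\big(\grad u/W\big)=H$, and combining this with the scalar soliton condition $H=\ep'/W$ from the first step gives $\mathrm{div}\big(\grad u/W\big)=1/W$, which is exactly \eqref{fund_eq}; all the implications are reversible, so the equivalence follows. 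I expect the main obstacle to be precisely this last reorganization together with the sign bookkeeping: one must check that the correction term coming from $\gamma^{ij}$ cancels exactly against the term produced by differentiating $W$, so that the Hessian trace collapses into a single divergence, and one must keep $\ep$ and $\ep'$ straight, since in the Lorentzian cases they are genuinely $-1$. A cleaner but equivalent route would apply the identity $\mathrm{div}_\gamma(V^{\top})=\produ{\vec H,V}$ for the parallel field $V=\dt$, combined with the volume comparison $\sqrt{\vert\gamma\vert}=W\sqrt{\vert g\vert}$, which packages the same cancellation in one stroke.
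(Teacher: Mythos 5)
Your argument is correct and follows essentially the same route as the paper: compute the inverse induced metric $\gamma^{ij}=g^{ij}-\tfrac{\ep}{W^2}u^iu^j$, trace the second fundamental form of the graph against it, and observe that the resulting correction term is exactly the one produced by differentiating $W$, so that the trace collapses to $\mathrm{div}\big(\grad u/W\big)$. The only cosmetic difference is that the paper works in a $g$-orthonormal (implicitly geodesic) frame and cancels the factor $\produ{\nu,\dt}$ from both sides of the soliton equation without ever evaluating it, whereas you carry the explicit value $\produ{\nu,\dt}=\ep'/W$ through and let the two $\ep'$ factors cancel at the end --- your sign bookkeeping is, if anything, slightly more transparent.
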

\begin{proof}
Take a local $g$-orthonormal frame $B=(e_1,\ldots,e_n)$ on $M$, such that $g(e_i,e_j)=\ep_i\delta_{ij}$ for any $i,j=1,\dots n$, with $\ep_i=\pm 1$ depending on the signature of $g$. We denote $u_i=du(e_i)$, $i=1,\ldots,n$. For this frame, we compute the induced metric $\gamma=F^{*}\produ{,}$, so that the coefficients of the Gram matrix and of its inverse are 
\[ \gamma_{ij}=\gamma(e_i,e_j) = \ep_i\delta_{ij}+ \ep'u_iu_j, \quad  
 \gamma^{ij} = \ep_i\delta_{ij} -\frac{\ep}{W^2}\ep_i\ep_ju_iu_j.
\]
Let  $\nab$ the Levi-Civita connection of $(\bar{M},\produ{,})$. We recall O'Neill's book \cite{ON}, and its equations for the Levi-Civita connection of a (warped) product. Then, 
\begin{align*}
&H=\produ{\vec{H}_F,\nu}=\produ{\dt^{\perp}, \nu}=\produ{\dt,\nu}  =\produ{\nu,\mathrm{tr}_\gamma(II_F)} 
=\sum_{i,j}\gamma^{ij}\produ{\nu,\nab_{dF(e_i)}dF(e_j)} \\
& = \sum_{i,j}\gamma^{ij}\produ{\nu,\big(\nabla_{e_i}e_j,0\big)+e_i(u_j)\dt } 
=\sum_{i,j}\gamma^{ij}e_i(u_j)\produ{\nu,\dt},
\end{align*}
so that
\begin{align*}  1=\sum_{i,j}\gamma^{ij}e_i(u_j) = \sum_{i,j}\left(\ep_i \delta_{ij} -\frac{\ep_i\ep_ju_iu_j}{\ep W^2}\right)e_i(u_j) = \mathrm{div}(\nabla u) -\frac{\ep}{W^2}\sum_{i,j}\ep_i\ep_ju_iu_je_i(u_j).
\end{align*}
By recalling that $W^2=\ep(\ep'+g(\nabla u,\nabla u))$, we compute
\begin{align*}
& 2W\produ{\nabla W,\nabla u}=\produ{\nabla (W^2),\nabla u}=\ep \sum_{i}\ep_iu_ie_i\left(g(\nabla u,\nabla u)\right)
 \\
& = \ep \sum_{i} \ep_i u_i \sum_j\ep_j e_i(u_j^2) = 2\ep\sum_{i,j} \ep_i\ep_j u_iu_je_i(u_j).
\end{align*}
Coming back
\begin{gather*}
1=\mathrm{div}(\nabla u)- \frac{1}{W}\produ{\nabla W,\nabla u},\\
\frac{1}{W}=\frac{\mathrm{div}(\nabla u)}{W} -\frac{1}{W^2}\produ{ \nabla W,\nabla u} 
=\frac{\mathrm{div}(\nabla u)}{W} + \produ{\nabla\big(1/W\big),\nabla u} =\mathrm{div}\left(\frac{\nabla u}{W}\right).
\end{gather*}
\end{proof}
\begin{corollary} Let $(M,g)$ be a compact orientable manifold without boundary.  Then, $M$ does not admit any globally defined, non-degenerate, graphical, spacelike Translating Soliton $F:M\rightarrow (M\times\mathbb{R},g+\ep'dt^2)$.
\end{corollary}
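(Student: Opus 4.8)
The plan is to integrate the soliton equation of Proposition~\ref{basicequation} over $M$ and reach a contradiction via the divergence theorem. First I would record the sign data coming from the hypotheses: if $F$ is spacelike, its normal $\nu$ is timelike, so $\ep=\produ{\nu,\nu}=-1$. In any case the non-degeneracy assumption forces $W=+\sqrt{\ep(\ep'+\vert\grad u\vert_g^2)}>0$ everywhere on $M$, since the vanishing of $W$ is exactly the degenerate case, which is excluded. Consequently the function $1/W$ is \emph{strictly positive} at every point of $M$.

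Next I would invoke Proposition~\ref{basicequation}: a globally defined graphical translating soliton $F(p)=(p,u(p))$ exists if and only if
\[
\mathrm{div}\!\left(\frac{\grad u}{W}\right)=\frac{1}{W}=H
\]
holds throughout $M$, the divergence being taken with respect to $g$. The essential point is that the left-hand side is the $g$-divergence of the smooth, globally defined vector field $X:=\grad u/W$, while the right-hand side is pointwise positive by the previous paragraph.

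Then I would apply the divergence theorem to $X$. Since $M$ is orientable it carries a $g$-volume form $dV_g$, and the identity $\mathrm{div}_g(X)\,dV_g=\mathcal{L}_X\,dV_g=d(\iota_X\,dV_g)$ is valid for any metric, irrespective of its signature. As $M$ is compact with $\partial M=\emptyset$, Stokes' theorem gives $\int_M \mathrm{div}_g(X)\,dV_g=\int_{\partial M}\iota_X\,dV_g=0$. Substituting the soliton equation then yields $0=\int_M (1/W)\,dV_g$, which is impossible because the integrand is strictly positive on the nonempty compact manifold $M$. This contradiction shows that no such $F$ can exist.

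The argument is essentially a one-line integration, so I expect no genuine obstacle; the only step deserving a word of care is that the divergence theorem is being used with a possibly \emph{pseudo}-Riemannian base metric $g$. This is harmless once $M$ is assumed orientable, since neither the existence of $dV_g$ nor the Cartan-type identity $\mathrm{div}_g(X)\,dV_g=d(\iota_X\,dV_g)$ depends on the signature of $g$. What the spacelike hypothesis secures is merely that $H=1/W$ is a well-defined, strictly positive function, and it is precisely this fixed positive sign that the integration turns into a contradiction.
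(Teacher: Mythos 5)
Your proposal is correct and follows exactly the paper's argument: integrate the soliton equation of Proposition \ref{basicequation} over the compact boundaryless $M$, use the divergence theorem to kill the left-hand side, and contradict the strict positivity of $1/W$. The extra remarks on the signature-independence of $\mathrm{div}_g(X)\,dV_g=d(\iota_X\,dV_g)$ and on why $W>0$ are sound but do not change the route.
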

\begin{proof} Assume that there exists a globally defined graphical Translating Soliton on $M$. Then, for some function $u\in C^2(M)$,  equation \eqref{fund_eq} holds true. By using the volume form $d\mu_g$, we obtain
\[ 0 = 
\int_M \mathrm{div}\left(\frac{\nabla u}{\sqrt{\varepsilon(\ep'+\vert \nabla u\vert^2_g)}}\right) d\mu_g = \int_M \frac{1}{\sqrt{\varepsilon(\ep'+\vert \nabla u\vert^2_g)}} d\mu_g >0.
\]
This is a contradiction.
\end{proof}

\section{Submersions and Lie Groups}\label{subslie}

Our next target consists of studying when the equation of the mean curvature flow can be reduced to a particular ODE. The main geometrical technique consists of using a Lie group acting by isomorphisms whose orbits are codimension one sumbanifolds, namely cohomogenity one actions. We choose the case when the image of the natural projection is going to be identified with an open interval. This is not a big deal, since one can remove the non-regular orbits, but later it might be necessary to see the possible extension. Before starting, we need the following technical lemma.

\begin{lemma} \label{lemilla} Take $\ep',\tilde{\ep}\in\{\pm 1\}$, $I$  an open interval and $\pi:(M,g_{_M})\rightarrow(I,\tilde\ep ds^2)$ a pseudo-Riemannian submersion such that each fiber $\pi^{-1}(s)$ has constant mean curvature $h(s)$ w.r.t. $-\grad\pi$.  Given $f\in C^2(I)$,  define $u=f\circ\pi\in C^{2}(M)$. 
Assume that $\ep=\mathrm{sign}(\ep'+ \tilde\ep (f'\circ \pi)^2)=\pm 1$ is a constant function. 
\begin{enumerate}
\item $h:I\rightarrow\R$ is a smooth function, and such that $\mathrm{div}(\grad\pi)=h\circ\pi$.
\item $\grad u = (f'\circ \pi)\grad \pi$, $\vert \grad u\vert_g^2=  \tilde\ep (f'\circ\pi)^2$, $\mathrm{div}(\grad u) =  \tilde\ep (f''\circ\pi)+(f'\circ\pi)(h\circ\pi)$.
\item If $W=+\sqrt{\ep\big(\ep'+ \tilde\ep(f'(\pi))^2\big)}\,>0$, then $\grad W =\displaystyle \frac{\ep \tilde\ep f'(\pi)f''(\pi)}{W}\grad\pi.$ 
\end{enumerate}
\end{lemma}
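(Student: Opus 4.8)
The plan is to first extract the single structural fact that drives all three items. Since $\pi$ is constant on its fibers, the field $\grad\pi$ (gradient of $\pi$ regarded as a real function via the coordinate $s$ on $I$) is horizontal; and because $d\pi$ restricts to a linear isometry from the horizontal space onto $(TI,\tilde\ep\,ds^2)$, I would show that $g(\grad\pi,\grad\pi)=\tilde\ep$ is \emph{constant} on all of $M$. Concretely, if $E$ denotes the horizontal lift of $\partial_s$, then $g(E,E)=\tilde\ep$ and, since $g(\grad\pi,E)=E(\pi)=1$ forces $\grad\pi=\tilde\ep\,E$, one gets $\vert\grad\pi\vert_g^2=\tilde\ep^2\,g(E,E)=\tilde\ep$. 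The constancy of this norm is what makes all the later computations collapse, so I would establish it carefully at the outset.

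For item (1) I would relate $\mathrm{div}(\grad\pi)$ to the mean curvature of the fibers through an adapted $g$-orthonormal frame $\{\grad\pi,V_1,\dots,V_{n-1}\}$, with the $V_i$ tangent to the fiber and $g(V_i,V_j)=\ep_i\delta_{ij}$. Splitting the divergence along this frame gives $\mathrm{div}(\grad\pi)=\tilde\ep\,g(\na_{\grad\pi}\grad\pi,\grad\pi)+\sum_i\ep_i\,g(\na_{V_i}\grad\pi,V_i)$, and the first term vanishes because $g(\na_{\grad\pi}\grad\pi,\grad\pi)=\tfrac12\,\grad\pi\big(g(\grad\pi,\grad\pi)\big)=0$ by the constancy just proved. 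Differentiating $g(V_i,\grad\pi)=0$ rewrites the remaining sum as $-\sum_i\ep_i\,g(\na_{V_i}V_i,\grad\pi)$, which, under the paper's convention $\vec H=\ep H\nu$ and with the normal taken to be $-\grad\pi$, is exactly the scalar mean curvature $h$ of the fiber; hence $\mathrm{div}(\grad\pi)=h\circ\pi$. Smoothness of $h$ then follows because $\mathrm{div}(\grad\pi)$ is smooth on $M$ and $h=\mathrm{div}(\grad\pi)\circ\sigma$ for any smooth local section $\sigma$ of the submersion.

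Items (2) and (3) are short consequences. The chain rule gives $\grad u=\grad(f\circ\pi)=(f'\circ\pi)\grad\pi$, hence $\vert\grad u\vert_g^2=(f'\circ\pi)^2\,\vert\grad\pi\vert_g^2=\tilde\ep(f'\circ\pi)^2$; applying the product rule $\mathrm{div}(\phi\,\grad\pi)=\phi\,\mathrm{div}(\grad\pi)+g(\grad\phi,\grad\pi)$ with $\phi=f'\circ\pi$ and $\grad\phi=(f''\circ\pi)\grad\pi$, and using item (1) together with $\vert\grad\pi\vert_g^2=\tilde\ep$, yields $\mathrm{div}(\grad u)=\tilde\ep(f''\circ\pi)+(f'\circ\pi)(h\circ\pi)$. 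For item (3) I would simply differentiate the relation $W^2=\ep\big(\ep'+\tilde\ep(f'\circ\pi)^2\big)$ to get $2W\,\grad W=\ep\tilde\ep\cdot 2(f'\circ\pi)(f''\circ\pi)\grad\pi$, which is the claimed formula. \textbf{The main obstacle} is item (1): pinning down the signs in the pseudo-Riemannian mean-curvature identity and matching the convention $\vec H=\ep H\nu$ to the choice of normal $-\grad\pi$ in the statement. Once the constancy $\vert\grad\pi\vert_g^2=\tilde\ep$ and this sign bookkeeping are in place, the rest is routine.
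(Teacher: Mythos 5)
Your proposal is correct and follows essentially the same route as the paper: an adapted orthonormal frame splitting the divergence of $\grad\pi$ into the fiber-tangent part (identified with $h$) and the normal part (vanishing since $\vert\grad\pi\vert_g^2=\tilde\ep$ is constant), followed by the chain rule, the product rule for the divergence, and differentiation of $W^2$. You merely make explicit two steps the paper leaves implicit, namely the constancy of $\vert\grad\pi\vert_g^2$ and the sign bookkeeping matching $\mathrm{div}(\grad\pi)$ to the mean curvature with respect to $-\grad\pi$.
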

\begin{proof} First, given a local orthonormal frame $(e_1,\ldots,e_n)$ such that $e_n=\grad\pi$ and the vector fields $\{e_i\}_{i=1}^{n-1}$ are tangent to the fibers, with $\ep_i=g(e_i,e_i)$,  then   
\[\mathrm{div}(\grad\pi)=\sum_{i=1}^n \ep_i g(\nabla_{e_i}\grad\pi,e_i) 
=\sum_{i=1}^{n-1} \ep_i g(\nabla_{e_i}\grad\pi,e_i) =h\circ \pi.
\]
For simpleness, we write $f\circ\pi=f(\pi)$, etc. For the second statement, given $p\in M$ and $X\in T_pM$, we compute  
\[g_p(\grad(f\circ\pi),X)=d(f\circ\pi)_pX = df_{\pi(p)}(d\pi_p(X)) = f'(\pi(p))g_p(\grad\pi,X),\] 
and we point out that $|\nabla\pi|_g^2=\tilde\ep$. Next, 
\[\mathrm{div}(\grad u)=\mathrm{div}(f'(\pi)\grad\pi) = g(\grad (f'\circ\pi),\grad \pi) + f'(\pi)\mathrm{div}(\grad\pi)
 = \tilde\ep f''(\pi)+f'(\pi)h(\pi). 
\]
Finally, $2W\grad W = \grad(W^2) = 2\tilde\ep\ep  f'(\pi)f''(\pi)\grad\pi.$ 
\end{proof}

\begin{proposition}\label{prop_Liegroup_is_submersionwithconstant mean curvaturefibers}
Let $(M,g)$ be a connected pseudo-Riemannian\,manifold and $I$ an open interval. Let $\Sigma$ be a Lie group acting by isometries on $M$ and $\pi:M\rightarrow  I$ be a submersion such that the fibers of $\pi$ are orbits of the action, and $\grad\pi$ is never zero or lightlike.  Then, there exist a constant $\tilde{\ep}=\pm 1$ and a smooth bijective map $v:I\rightarrow J\subset \R$, $J$ an interval, such that $v\circ\pi:(M,g)\rightarrow(J,\tilde\ep ds^2)$ is a pseudo-Riemannian\,submersion with constant mean curvature fibers. 
\end{proposition}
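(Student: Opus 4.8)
The plan is to construct the reparametrization $v$ explicitly from the norm of $\grad\pi$, and then to deduce both the pseudo-Riemannian submersion property and the constant-mean-curvature property from the invariance of $\pi$ under the isometric action. First I would record that $\pi$ is $\Sigma$-invariant: since each fiber $\pi^{-1}(s)$ is a single orbit, $\pi(\sigma\cdot p)=\pi(p)$ for all $\sigma\in\Sigma$ and $p\in M$. Because every $\sigma\in\Sigma$ is an isometry, $\sigma_*(\grad\pi)=\grad(\pi\circ\sigma^{-1})=\grad\pi$, so $\grad\pi$ is $\Sigma$-invariant; in particular $\vert\grad\pi\vert_g^2$ is constant along each orbit, hence along each fiber. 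Since $\pi$ is a submersion, this fiber-constant smooth function descends to a smooth function $\lambda:I\rightarrow\R$ with $\lambda\circ\pi=\vert\grad\pi\vert_g^2$. By hypothesis $\grad\pi$ is never zero or lightlike, so $\lambda$ never vanishes, and since $M$ is connected $\lambda$ has a constant sign, which I take as $\tilde\ep:=\mathrm{sign}(\lambda)\in\{\pm 1\}$.

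Next I would define $v:I\rightarrow\R$ by integrating $v'(s)=\sqrt{\tilde\ep/\lambda(s)}$ from a chosen basepoint. The integrand is smooth and strictly positive, because $\tilde\ep/\lambda>0$, so $v$ is smooth and strictly increasing, hence a bijection onto the interval $J:=v(I)$. Writing $u:=v\circ\pi$, the gradient computation of Lemma \ref{lemilla} gives $\grad u=(v'\circ\pi)\grad\pi$, whence $\vert\grad u\vert_g^2=(v'\circ\pi)^2(\lambda\circ\pi)=\tilde\ep$. For a one-dimensional target this is exactly the condition that $u=v\circ\pi:(M,g)\rightarrow(J,\tilde\ep\, ds^2)$ be a pseudo-Riemannian submersion: the horizontal line is spanned by $\grad u$, which is now a unit vector, and $du(\grad u)=\vert\grad u\vert_g^2\,\partial_s=\tilde\ep\,\partial_s$ shows that $du$ restricts to a linear isometry from the horizontal spaces onto $TJ$ endowed with $\tilde\ep\, ds^2$.

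Finally, for the fibers: they coincide with the fibers of $\pi$, since $v$ is a bijection, namely the orbits of $\Sigma$, which are homogeneous submanifolds. Repeating the invariance argument, $\grad u$ is $\Sigma$-invariant and $\Sigma$ acts by isometries, so $\mathrm{div}(\grad u)$ is constant along each orbit and therefore descends to a smooth function on $J$; up to the sign fixed by the chosen normal $-\grad u$, this is the mean curvature of the fibers. Equivalently, one may argue directly that an orbit of an isometric action is homogeneous and that isometries preserve the second fundamental form, so the mean curvature is necessarily constant along each fiber. This places $v\circ\pi$ squarely in the setting of Lemma \ref{lemilla}, completing the proof.

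I expect the only genuinely delicate points to be the smooth descent of $\vert\grad\pi\vert_g^2$ to the base, which relies on combining the submersion structure with the isometric action to force fiber-constancy, and the normalization of the target metric: a naive choice would leave a general conformal factor $\lambda\, ds^2$, and it is precisely the choice $v'=\sqrt{\tilde\ep/\lambda}$ that renders the target metric equal to $\tilde\ep\, ds^2$. By contrast, the constant-mean-curvature conclusion is essentially automatic from the homogeneity of the orbits and requires only bookkeeping of the normal direction and its sign.
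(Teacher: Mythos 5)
Your proposal is correct and follows essentially the same route as the paper: descend $\vert\grad\pi\vert_g^2$ to a nonvanishing function on $I$, reparametrize the base by $v$ with $v'$ equal to the reciprocal of its square root so that $\vert\grad(v\circ\pi)\vert_g^2=\tilde\ep$, and conclude constant mean curvature of the fibers from the fact that they are orbits of an isometric action. You in fact supply a detail the paper leaves implicit, namely the $\Sigma$-invariance argument showing that $\vert\grad\pi\vert_g^2$ is fiber-constant and hence descends smoothly.
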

\begin{proof} 
Since $\pi$ is a submersion, and $\vert\grad \pi\vert_g^2\neq 0$, there exists a smooth function $z:I\rightarrow\R\backslash\{0\}$ such that $(z\circ \pi)^2 =\tilde\ep \vert\grad\pi\vert_g^2$, for a constant $\tilde \ep=\pm 1$. Next, we consider a function $v:I\rightarrow \R$ such that $v'=1/z$. As $v'$ has constant sign, $v$ is injective, and we restrict it to its image, $v:I\rightarrow J$. By Lemma \ref{lemilla}, $\vert \grad(v\circ\pi)\vert^2_g =\tilde\ep$.  In particular, $v\circ \pi:(M,g)\rightarrow (J,\tilde\ep ds^2)$ is a pseudo-Riemannian\,submersion. Note that the fibers of $v \circ \pi$ are the same as the fibers of $\pi$, which are  the orbits of $\Sigma$. As it acts by isometries, they have constant mean curvature w.r.t. $-\grad(v\circ\pi)$. 
\end{proof}

From now on, we assume that the action of the Lie group $\Sigma$ on $(M,g)$ by isometries is \textit{proper}, and that at least one of the orbits is of codimension one. These are the well-known \textit{cohomogeneity one $\Sigma$-manifolds}. According to \cite{AA} (and references therein), there is another Riemannian metric $g_R$ such that $\Sigma$ acts on $(M,g_R)$ by isometries, and the quotient $M/\Sigma$ is diffeomorphic to an interval or to $\mathbb{S}^1\equiv \R/\mathbb{Z}$. By removing the possible singular orbits (if any, up to two), we can assume that $M/\Sigma=:I$ is (diffeomorphic to) an open interval. This works at  differentiable level, so we can return to our original metric $g$, and the quotient is still an open interval. Let $\pi:M\to I\equiv M/\Sigma$ be the projection. We need $\nabla\pi$ not to be zero or lightlike. By Lemma  \ref{prop_Liegroup_is_submersionwithconstant mean curvaturefibers}, we can assume that $g_{_M}(\nabla\pi,\nabla\pi)=\pm 1=:\tilde{\ep}$, making $\pi$ a pseudo-Riemannian submersion. Each fiber of $\pi$ will be an orbit of $\Sigma$. And because it acts by isometries, then each orbit $\pi^{-1}\{s\}$ will be of constant mean curvature $h(s)$ w.r.t $-\nabla\pi$. Thus, we have an associated smooth function $h:I\to\R$. We need to bear in mind all this setting, so that we introduce the following definition.

\begin{definition}\label{decomposable} Under the previous setting, let $\Sigma_o$ one of the stabilizers of $\Sigma$. Let $\beta:I\to M$ be a unit curve orthogonal to all orbits of $\Sigma$, such that the map 
\begin{equation} \label{equ:decom} \phi : I\times (\Sigma/\Sigma_o)\to M, \quad \phi(s,[\sigma])=\sigma\cdot \beta(s),
\end{equation}
is a diffeomorphism and $\pi\circ\phi(s,[\sigma])=s$. We will say that \eqref{equ:decom} is a decomposition of $M$ by $\Sigma$, or that $M$ is decomposable by $\Sigma$, with associated function $h:I\to\R$. We cannot forget the submersion $\pi:(M,g)\to (I,\tilde{\ep}ds^2)$. 
\end{definition}
\begin{remark}  For the sake of simpleness, sometimes is it better to use $\phi:I\times \Sigma\to M$. But then, $\phi(s,-):\Sigma\to M$ might not be an immersion, although the image is again an orbit.$\Box$ 
\end{remark}
We recall that a function $u:M\rightarrow\R$ is called \textit{invariant} by the Lie group $\Sigma$, or also $\Sigma$-invariant,  if it satisfies
\begin{equation}
\label{invariance} u:M\rightarrow\R, \quad u(x)=u(\sigma\cdot x), \ \forall x\in M, \ \forall \sigma\in \Sigma. 
\end{equation}
Accordingly, we say that a graphical translating soliton is \textit{invariant} by the Lie group $\Sigma$, or also \textit{$\Sigma$-invariant}, when its graph map is invariant by $\Sigma$. Similarly, a translating soliton $F:\Omega\subset M\to M\times\mathbb{R}$ is called $\Sigma$-invariant when $\sigma\cdot F(x)\in F(\Omega)$ for each $x\in \Omega$ and each $\sigma\in \Sigma$. 
\begin{theorem}\label{equiv_ODE} Let $(M,g)$ be a pseudo-Riemannian manifold, decomposable by $\Sigma$, as in Definition \ref{decomposable}. Given $\ep'=\pm 1$, $f:I\to \R$ a smooth function, construct the graph map $F:M\rightarrow (M\times\R,\produ{,}=g_{_M}+\ep'dt^2)$, $F(x)=(x,u(x))$, $u=f\circ\pi$.  Then, $F$ is a $\Sigma$-invariant translating soliton if, and only if,  $f$ is a solution to 
\begin{equation}
\label{keyODE} f''(s)=\big( \tilde\ep +\ep'  (f'(s))^2\big)\big(1-f'(s)h(s)\big).
\end{equation}
\end{theorem}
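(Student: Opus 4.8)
The plan is to reduce the statement to a direct computation that feeds the identities of Lemma~\ref{lemilla} into the soliton characterization of Proposition~\ref{basicequation}. First I would dispose of the invariance for free: since $u=f\circ\pi$ factors through $\pi$, it is constant on the fibers of $\pi$, which are precisely the orbits of $\Sigma$, so $u$ is $\Sigma$-invariant in the sense of \eqref{invariance}, and its graph is $\Sigma$-invariant because $\Sigma$ acts trivially on the $\R$-factor of $M\times\R$. Hence the $\Sigma$-invariance in the conclusion is automatic, and the whole content is the equivalence between ``$F$ is a translating soliton'' and the ODE \eqref{keyODE}. Throughout I work under the standing non-degeneracy hypothesis of Section~\ref{setup}, so that $\ep=\mathrm{sign}(\ep'+\tilde\ep(f'\circ\pi)^2)$ is a well-defined constant and $W$ from \eqref{normal2} is smooth and nowhere zero; this is exactly the hypothesis of Lemma~\ref{lemilla}.

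Next I would invoke Proposition~\ref{basicequation} in the equivalent intermediate form obtained inside its proof, namely $\mathrm{div}(\grad u)-\frac{1}{W}\produ{\grad W,\grad u}=1$ (equivalent to the divergence form since $W\neq 0$). Into this I substitute the three computations of Lemma~\ref{lemilla}: $\grad u=(f'\circ\pi)\grad\pi$ with $\vert\grad\pi\vert_g^2=\tilde\ep$, $\mathrm{div}(\grad u)=\tilde\ep(f''\circ\pi)+(f'\circ\pi)(h\circ\pi)$, and $\grad W=\frac{\ep\tilde\ep(f'\circ\pi)(f''\circ\pi)}{W}\grad\pi$. The inner product collapses to $\produ{\grad W,\grad u}=\frac{\ep(f'\circ\pi)^2(f''\circ\pi)}{W}$, and dividing by $W$ together with $W^2=\ep(\ep'+\tilde\ep(f'\circ\pi)^2)$ turns the soliton equation into a pointwise identity whose two sides depend on $x$ only through $s=\pi(x)$.

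Since $\pi$ is a surjective submersion onto $I$, this pointwise identity holds for every $x\in M$ if and only if the corresponding relation holds for every $s\in I$; this is the only place surjectivity of $\pi$ is used, and it is what secures the equivalence in \emph{both} directions. It then remains to tidy the ODE: clearing the denominator $\ep'+\tilde\ep(f')^2$ (nonzero by non-degeneracy), the two terms $\pm(f')^2f''$ cancel, leaving $\tilde\ep\ep'f''+f'h\,(\ep'+\tilde\ep(f')^2)=\ep'+\tilde\ep(f')^2$. Solving for $f''$ and simplifying the prefactor via $\tilde\ep\ep'(\ep'+\tilde\ep(f')^2)=\tilde\ep+\ep'(f')^2$ (using $\tilde\ep^2=(\ep')^2=1$) produces exactly \eqref{keyODE}; as every step is reversible, the equivalence follows.

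I expect no serious analytic obstacle, since given Lemma~\ref{lemilla} the computation is essentially algebraic. The one genuinely conceptual point is the passage from a PDE on $M$ to an ODE on $I$, which rests on $u$ factoring through $\pi$ and on $\pi$ being a surjective submersion; without this the pointwise identity could not be promoted to the ODE. The only real hazard is bookkeeping with the three signs $\ep,\ep',\tilde\ep$ and the repeated use of $\ep^2=(\ep')^2=\tilde\ep^2=1$, because a single sign slip would destroy the cancellation of the cubic terms.
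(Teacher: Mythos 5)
Your proposal is correct and follows essentially the same route as the paper: substitute the identities of Lemma~\ref{lemilla} into the divergence equation of Proposition~\ref{basicequation}, use $W^2=\ep(\ep'+\tilde\ep(f')^2)$ to collapse the result to a relation in $s=\pi(x)$, and simplify the sign prefactor via $\tilde\ep\ep'(\ep'+\tilde\ep(f')^2)=\tilde\ep+\ep'(f')^2$; your sign bookkeeping checks out. Your explicit remarks on the automatic $\Sigma$-invariance and on the role of surjectivity of $\pi$ in the reverse implication are points the paper leaves implicit, but they do not change the argument.
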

\begin{proof}[Proof of Theorem \ref{equiv_ODE}] We denote $u=f\circ\pi$. 
Using Lemma \ref{lemilla} we get directly from equation \eqref{basicequation}, 
\begin{align*}
& \frac{1}{W}=\mathrm{div}\left( \frac{\grad u}{W}\right) 
= \frac{\mathrm{div}(\grad u)}{W}- \frac{g(\grad W,\grad u)}{W^2} \\
& 
= \frac{\tilde\ep f''(\pi)+f'(\pi)h(\pi)}{W}- \frac{1}{W^2} g\left(\frac{\tilde\ep\ep f'(\pi)f''(\pi)}{W}\grad\pi,f'(\pi)\grad\pi\right) \\
& =\frac{ \tilde\ep f''(\pi)+f'(\pi)h(\pi)}{W} 
-  \frac{\tilde\ep\ep f'(\pi)^2f''(\pi)}{W^3} g\left(\grad\pi,\grad\pi\right) \\
& = \frac{1}{W} \left( f'(\pi)h(\pi)+  f''(\pi)  \frac{  \ep'\tilde\ep  }{\ep' + \tilde\ep f'(\pi)^2}  \right).
\end{align*}
Therefore, we conclude that \eqref{fund_eq} reduces to
\[  
f'(\pi)h(\pi) + \frac{  1 }{\tilde\ep +\ep 'f'(\pi)^2} f''(\pi) = 1.
\] 
In other words, function $f$ is a solution to \eqref{keyODE}. 
\end{proof} 
\begin{remark} Another approach is to start with a $\Sigma$-invariant function $u:M\to\R$. But then, we immediately obtain another function $f:I\to\R$ such that $u=f\circ\pi$.
\end{remark}

On the other hand, we can extend the action of $\Sigma$ to $M\times\R$ as follows: 
\begin{align} \nonumber 
\bar{M}=M\times\R, \produ{\,,}=g+\ep\mathrm{d}t^2,\ \ep=\pm 1, \quad \Sigma\times \bar{M}\to \bar{M}, \ 
(\sigma, (x,t))\mapsto (\sigma\cdot x,t). 
\end{align}
In fact, given any curve $\delta:J\subset I\to I\times\R$, $\delta=(\delta_1,\delta_2)$, we can construct a map whose image is $\Sigma$-invariant, namely
\begin{equation}\label{resureccion}
 F:J\times(\Sigma/\Sigma_o)\to M\times\R, \quad F(s,[\sigma])=\big(\phi(\delta_1(s),[\sigma]),\delta_2(s)\big).
\end{equation}
We will now give some general results about the behavior of solutions. This next result states the existence of the so-called \textit{wing-like solutions}. 
\begin{corollary} \label{oide} Assume there $M$ is decomposable as in Definition \ref{decomposable}, where $h$ is the mean curvature of the orbits of the action. Consider $s_o\in I$ such that either $h(s_o)\neq 0$, or $s_o$ is an isolated zero of $h$. Then, for each $y_o\in \R$, there exist a real number $\rho>0$ and a  translating soliton $F:(y_o-\rho,y_o+\rho)\times(\Sigma/\Sigma_o) \rightarrow\bar{M}$ such that it is the union of two graphical translating solitons. 
\end{corollary}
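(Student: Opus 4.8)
The plan is to reparametrize the profile curve by the vertical coordinate $t$, so that the non-graphical (vertical tangent) point becomes a \emph{regular} point of a new ODE. Concretely, wherever $f'\neq 0$ I would set $\alpha=f^{-1}$, i.e. describe the profile curve as $s=\alpha(t)$, $\delta(t)=(\alpha(t),t)$ in the notation of \eqref{resureccion}. Since $\alpha'(t)=1/f'(\alpha(t))$ and hence $f''=-\alpha''/(\alpha')^{3}$, a direct substitution turns \eqref{keyODE} into
\[
\alpha''(t)=\big(\ep'+\tilde\ep\,\alpha'(t)^2\big)\big(h(\alpha(t))-\alpha'(t)\big),
\]
whose right-hand side is a smooth, hence locally Lipschitz, function of $(\alpha,\alpha')$ because $h$ is smooth by Lemma \ref{lemilla}. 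The crucial gain is that the vertical tangent $f'=\pm\infty$ now corresponds to the perfectly regular condition $\alpha'=0$. This ODE is moreover autonomous in $t$, reflecting the invariance of \eqref{keyODE} under vertical translations $f\mapsto f+\mathrm{const}$, which is exactly what lets me place the soliton at an arbitrary height $y_o$.

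I would first treat the non-degenerate case $h(s_o)\neq0$. By Picard--Lindel\"of, solve the initial value problem $\alpha(y_o)=s_o$, $\alpha'(y_o)=0$ on a maximal interval and restrict to $(y_o-\rho,y_o+\rho)$ for small $\rho>0$. Evaluating the ODE at $t=y_o$ gives $\alpha''(y_o)=\ep'h(s_o)\neq0$, so $y_o$ is a \emph{strict} local extremum of $\alpha$; thus $\alpha'$ has a constant, nonzero sign on each of $(y_o-\rho,y_o)$ and $(y_o,y_o+\rho)$, and $\alpha$ is a diffeomorphism onto an $s$-interval on each side. Inverting on each side yields $f_{\pm}=\alpha^{-1}$, and reversing the substitution (valid where $\alpha'\neq0$) shows each $f_\pm$ solves \eqref{keyODE}; by Theorem \ref{equiv_ODE} the graphs $F_\pm(x)=(x,(f_\pm\circ\pi)(x))$ are graphical translating solitons. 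The map $F(t,[\sigma])=(\phi(\alpha(t),[\sigma]),t)$ from \eqref{resureccion} is then precisely the union of $F_-$ and $F_+$, glued along the orbit $\pi^{-1}(s_o)$ where the tangent hyperplane is vertical, which is the asserted wing-like structure.

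Along the way I would verify that $\gamma$ stays non-degenerate and that $\ep$ stays constant on each piece. Near $t=y_o$ one has $|f_\pm'|=1/|\alpha'|$ large, so $\ep'+\tilde\ep f_\pm'^2$ has the sign of $\tilde\ep$ and never vanishes; hence for $\rho$ small $\ep\equiv\tilde\ep$ and the graph is everywhere non-degenerate, as the set-up requires. In the genuinely Lorentzian case $\tilde\ep=-\ep'$ the quantity $\ep'+\tilde\ep f'^2$ changes sign at $|f'|=1$, so keeping $\rho$ small is what keeps us on one side of this lightlike locus; this is why only a local statement is claimed, and it is the seed of the multi-region gluing phenomenon mentioned in the introduction. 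The freedom in $y_o$ costs nothing, by the $t$-autonomy noted above.

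The delicate case, and the main obstacle, is the isolated zero $h(s_o)=0$. There $\alpha''(y_o)=\ep'h(s_o)=0$, and worse, the constant map $\alpha\equiv s_o$ already solves $\alpha(y_o)=s_o,\ \alpha'(y_o)=0$; by uniqueness it is the \emph{only} solution, and it corresponds to the degenerate vertical cylinder over the minimal orbit $\pi^{-1}(s_o)$, not to a wing (a trajectory can reach the equilibrium $(s_o,0)$ only asymptotically). My plan is to exploit isolatedness to bypass this: the hypothesis ``$h(s_o)\neq0$ or $s_o$ an isolated zero'' is exactly the statement that every neighbourhood of $s_o$ contains a point $s_1$ with $h(s_1)\neq0$ (it rules out $h\equiv0$ near $s_o$). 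Choosing such an $s_1$ arbitrarily close to $s_o$ and running the non-degenerate construction with turning radius $s_1$ produces the required two-graph soliton. I expect the careful bookkeeping here --- making the choice of $s_1$ and the resulting $\rho$ explicit and checking that the two graphs abut correctly --- to be where the real work lies.
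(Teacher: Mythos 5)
Your treatment of the case $h(s_o)\neq 0$ is exactly the paper's proof: the same IVP
\[
\alpha''(y)=\big(\ep'+\tilde\ep\,\alpha'(y)^2\big)\big(h(\alpha(y))-\alpha'(y)\big),\qquad \alpha(y_o)=s_o,\ \alpha'(y_o)=0,
\]
the same observation that $\alpha''(y_o)=\ep'h(s_o)\neq0$ forces a strict extremum, the same inversion into $f_\pm=\alpha_\pm^{-1}$, the same verification that $f_\pm$ solve \eqref{keyODE}, and the same reassembly via \eqref{resureccion}. Your extra remarks on non-degeneracy of $\gamma$ and the constancy of $\ep$ near the turning point are checks the paper leaves implicit, and they are correct.

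Where you genuinely diverge is the isolated-zero case. The paper keeps the turning point at $s_o$ itself and argues that, after shrinking $\rho$, one has $\alpha''(y)\neq0$ for $y\neq y_o$, whence $\alpha'$ is injective on each half-interval and one proceeds as before. Your observation that $(s_o,0)$ is an equilibrium of the first-order system when $h(s_o)=0$ --- so that by uniqueness the IVP solution is the constant $\alpha\equiv s_o$, i.e.\ the cylinder over the minimal orbit, which is nowhere graphical --- is correct, and it shows that the paper's own argument for this case does not go through as written. Your workaround (use isolatedness to pick $s_1$ near $s_o$ with $h(s_1)\neq0$ and run the non-degenerate construction there) does establish the corollary as literally stated, since the conclusion only asserts the existence of some two-graph soliton over $(y_o-\rho,y_o+\rho)\times(\Sigma/\Sigma_o)$; the trade-off is that you obtain a wing whose vertical orbit is $\pi^{-1}(s_1)$ rather than $\pi^{-1}(s_o)$, so the configuration the paper seems to intend (a wing turning exactly on the minimal orbit) is not produced --- and, by your equilibrium argument, cannot be produced by this IVP. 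In short: same proof in the main case, and in the degenerate case your route is different, sound for the stated claim, and exposes a real gap in the paper's version.
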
 
\begin{proof} Choose $y_o\in\R$. We consider the following IVP: 
\[ \alpha''(y) = \left(\ep' +\tilde\ep\alpha'(y)^2\right) \left( h(\alpha(y)) - \alpha'(y)\right), \quad \alpha'(y_o)=0, \ \alpha(y_o)=s_o\in I.
\]
As usual, there exists a smooth solution $\alpha:(y_o-\rho,y_o+\rho)\rightarrow\R$. Note that $y_o$ is a critical point of $\alpha$ and $\alpha''(y_o)=\ep'h(s_o).$ \\
\noindent\textit{Case $h(s_o)\neq 0$:} Then, $y_o$ is  an extremum of $\alpha$. The restrictions $\alpha_+=\alpha\vert_{(y_o,y_o+\rho)}$ and $\alpha_{-}=\alpha\vert_{(y_o-\rho,y_o)}$ will be injective, by reducing $\rho$ if necessary. Construct their inverse functions $f_+=\alpha_+^{-1}$ and $f_{-}=\alpha_{-}^{-1}$. We just have to show that $f_+$ and $f_{-}$ satisfy \eqref{keyODE}. To do so, we put $f_{+}(\alpha(y))=y$, and therefore
\begin{align*}
& 1=f_+'(\alpha(y))\,\alpha'(y), \quad 0=f_+''(\alpha(y))\,\alpha'(y)^2+f_+'(\alpha(y))\,\alpha''(y), \\
& f_+''(\alpha(y))\,\alpha'(y)^2 = -f_+'(\alpha(y))\big(\ep'+\tilde\ep\alpha'(y)^2\big)\big(h(\alpha(y)-\alpha'(y)\big).
\end{align*}
Next, we change $s=\alpha(y)$, and then $\alpha'(y)=1/f'_+(s)$, so that 
\begin{align*}
\frac{f_+''(s)}{f_+'(s)^2} = -f_+'(s)\left(\ep'+\frac{\tilde\ep}{f_+'(s)^2}\right)\left( h(s) -\frac{1}{f_+'(s)}\right) = 
\frac{1}{f_+'(s)^2}\big(\tilde\ep+\ep'f_+(s)^2\big)\big(1-h(s)f_+(s)\big).
\end{align*}
A similar computation holds for $f_{-}$. The union of the corresponding graphical translating solitons and their common boundary provide a smooth translating soliton, because $\alpha$ is a smooth map and $f_{+}$, $f_{-}$ are tools to reparametrize its graph. \smallskip

\noindent \textit{Case $s_o$ is an isolated zero of $h$:} By shrinking $\rho$ if necessary, then $\alpha''(y)\neq 0$ for any $y\neq y_o$, $y\in (y_o-\rho,y_o+\rho)$. The restriction $\alpha'\vert_{[y_o,y_o+\rho)}$ will be injective, and therefore, $\alpha'(y)\neq 0$ for any $y\in(y_o,y_o+\rho)$. This makes $\alpha\vert_{(y_o,y_o+\rho)}$ also injective. Similarly, $\alpha\vert_{(y_o-\rho,y_o)}$ is injective. We continue as in the previous case $(h(s_o)\neq 0)$.\smallskip 

In either case,  it is possible to obtain a smooth curve $\tau:(y_o-\rho,y_o+\rho)\to I\times \R$, $\tau=(\tau_1,\tau_2)$, which is the union of both graphs. Then, our translating soliton is constructed by using \eqref{resureccion}.
\end{proof}

\begin{remark} By \eqref{equ:decom}, $\bar{M}$ is diffeomorphic to $I\times(\Sigma/\Sigma_o)\times\R$, so the profile curve $\tau$ of Corollary \ref{oide} can be  embedded into $I\times \{[\sigma]\}\times \R$ for some $[\sigma]\in (\Sigma/\Sigma_o)$. At a certain point, $\dt$ ($t\in\R$) will be tangent to the image of $\tau$ (as in the proof of previous lemma). The causal character of the product will therefore influence the  causal character of the translating soliton. Moreover the causal character of the submersion (i.e. the sign of $\tilde{\ep}$) as well as the sign of $\ep'+\tilde\ep f'^2$ will determine the causal character of the winglike soliton.  For example, when $M$ is Riemannian, the product $M\times\mathbb{R}$ is Lorentzian (i.e. $\ep'=-1$), and the submersion is Riemannian (as in the rotationally invariant example of Section \ref{Rotational}), the wing-like translating solitons have to be timelike, since for spacelike solitons, $\alpha'(y_0)$ cannot be $0$ for any $y_0$.
\color{black}$\Box$ 
\end{remark}

\begin{corollary}  \label{global-solutions} Assume that $M$ is decomposable as in Definition \ref{decomposable}. Take  $\tilde{\ep},\ep'\in\{\pm1\}$ such that  $\tilde{\ep}\ep'=-1$. Given $f$ a local solution to \eqref{keyODE} and $s_o\in I$, such that $(f'(s_o))^2<1$, then $f$ can be globally extended to $f:I\to\R$. In particular, the associated  $\Sigma$-invariant translating soliton is graphical and it can be globally defined on $M$. 
\end{corollary}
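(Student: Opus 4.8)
The plan is to reduce the second-order equation \eqref{keyODE} to a single scalar first-order ODE for $p:=f'$, and then to show that the open interval $(-1,1)$ is invariant for that equation. This single bound does two jobs at once: it prevents the gradient from blowing up (which forces global existence) and it keeps the graph non-degenerate.

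First I would exploit the sign condition. Because $\tilde\ep\ep'=-1$, the coefficient satisfies $\tilde\ep+\ep'(f')^2=\tilde\ep\big(1-(f')^2\big)$, so \eqref{keyODE} is equivalent to the system
\[
f'=p,\qquad p'=\tilde\ep\,(1-p^2)\,(1-p\,h(s)),
\]
in which the equation for $p$ does not involve $f$ at all. Its right-hand side $G(s,p)=\tilde\ep(1-p^2)(1-p\,h(s))$ is smooth in $p$ and, since $h$ is smooth on $I$ by Lemma \ref{lemilla}, continuous in $s$ on $I\times\R$; hence it is locally Lipschitz in $p$ and Picard--Lindel\"of uniqueness applies. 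The decisive structural fact is that the factor $1-p^2$ vanishes at $p=\pm1$, so $p\equiv 1$ and $p\equiv-1$ are constant solutions of the $p$-equation.

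Next I would run the barrier argument. The given local solution has $p(s_o)=f'(s_o)\in(-1,1)$ since $(f'(s_o))^2<1$. If $p$ attained the value $1$ (resp.\ $-1$) at some $s_1$ in its maximal domain, then $p$ and the constant solution $\equiv 1$ (resp.\ $\equiv-1$) would share the point $(s_1,\pm1)$, forcing $p\equiv\pm1$ by uniqueness, a contradiction; and by the intermediate value theorem $p$ cannot leave $(-1,1)$ without first attaining $\pm1$. Hence $|p(s)|<1$ on the whole maximal interval of existence. Consequently $|p|$ is bounded, and on any compact subinterval of $I$ (where $h$ is bounded) so is $p'$; by the standard continuation theorem the trajectory $(s,p(s))$ then stays in a compact subset of $I\times\R$ as long as $s$ stays away from the endpoints of $I$, so the maximal interval must be all of $I$. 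Integrating, $f(s)=f(s_o)+\int_{s_o}^{s}p$ extends to all of $I$; the extended $f$ is $C^2$ (indeed $C^\infty$) and, by uniqueness, agrees with the given local solution where both are defined.

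Finally I would read off the geometry. For the globally defined $u=f\circ\pi$, Lemma \ref{lemilla} gives $\ep'+\vert\grad u\vert_g^2=\ep'+\tilde\ep(f')^2=\ep'(1-p^2)$, which is nowhere zero and of constant sign $\ep=\ep'$ precisely because $|p|<1$; thus the graph $F$ is non-degenerate everywhere with the constant $\ep$ required in the setup, and Theorem \ref{equiv_ODE} then guarantees that $F:M\to M\times\R$ is a $\Sigma$-invariant translating soliton, now defined on all of $M$. I expect the only real obstacle to be the invariance of $(-1,1)$: everything hinges on $\tilde\ep\ep'=-1$, which is exactly what produces the two equilibrium barriers $p\equiv\pm1$; without this sign condition $f'$ could run off to $\pm\infty$, the graph could degenerate, and the continuation argument would break down.
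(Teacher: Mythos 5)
Your proposal is correct and follows essentially the same route as the paper: reduce \eqref{keyODE} to the first-order equation for $w=f'$, observe that $\tilde\ep\ep'=-1$ makes $w\equiv\pm1$ constant solutions, trap the solution in $(-1,1)$ by uniqueness, extend globally by boundedness, and integrate. You merely spell out two details the paper leaves implicit (the continuation argument from boundedness of $w$ and $h$ on compacta, and the non-degeneracy check $\ep'+\tilde\ep(f')^2=\ep'(1-p^2)\neq 0$), which is a welcome but not substantively different elaboration.
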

\begin{proof} By Theorem \ref{equiv_ODE}, any $\Sigma$-invariant translating soliton will be constructed by a solution $f$ to \eqref{keyODE}. We make the change $w=f'$, so it reduces to $w'(s) = (\tilde{\ep}+\ep'w(s)^2) (1 - w(s)h(s))$. Since $\tilde{\ep}\ep'=-1$, the constant functions $w(s)=\pm 1$ are solutions to this differential equation. Then, given an initial condition $(s_o,f_1)\in I\times (-1,1)$, there exists a local solution $w:(s_o-\rho,s_o+\rho)\rightarrow\R$ such that $w(s_o)=f_1$ and $\vert w\vert<1$. By the uniqueness of solutions to IVP, $w$ cannot reach the values $\pm 1$, and so, it can be globally extended to $w:I\to(-1,1)$. We define  $f(s)=f_o+\int_{s_o}^s w(x)dx$ for some $f_o\in\R$. By using the curve $\delta:I\to I\times \R$, $\delta(s)=(s,f(s))$, we can (re)construct our $\Sigma$-invariant translating soliton by \eqref{resureccion}.
\end{proof}
\begin{remark} This section generalizes several classical results in a general geometric context. A well-known example is the case of a rotationally invariant translating soliton in $\mathbb{R}^{n+1}$. We consider the Lie group $SO(n)$ acting by isometries on $\mathbb{R}^n$ which gives rise to the Riemannian submersion $\pi:\mathbb{R}^{n}\backslash\{0\}\rightarrow\mathbb{R},\,\pi(x)=\|x\|$\color{black}, and the Riemannian product $\bar{M}=(\mathbb{R}^n\times\mathbb{R},g^{\mathbb{R}^n}+dt^2)$. An easy computation shows that the mean curvature of the fiber $\pi^{-1}(s)$ is $h(s)=(n-1)/s$. Using Theorem \ref{equiv_ODE}, we recover the well-known ODE for rotationally invariant translating solitons $f''=(1+f^2)(1-\frac{n-1}{s}f')$, studied in \cite{CSS}, where such solitons are classified. In that paper, it is shown that there exist only two types: a globally defined graphical soliton, the so-called bowl soliton or  translating paraboloid,  which can be seen as a particular application of our Corollary \ref{global-solutions}, and a family of non-graphical wing-like translators or translating catenoids, obtained by gluing two graphical solutions, which are a special case of Corollary \ref{oide}. 
$\Box$ 
\end{remark}

\section{All solutions to an ODE}\label{soluODE}

In this section, we will need some tools which can be found in the book \cite{Wiggins}. Our targent is to find all local solutions to the following equation, for $s>0$, 
\begin{equation} \label{yasesabe}
f''(s)=\big(1- f'(s)^2\big)\Big(1-\frac{n-1}{s}f'(s)\Big).
\end{equation}
or rather
\begin{equation}\label{unordenmenos}
w=f', \quad w'=\big(1- w^2\big)\Big(1-\frac{n-1}{s}w\Big).
\end{equation}
where $n\geq 2$ is a natural number. Clearly, there are two degenerate examples, namely,  $\hat{f}_{\pm}:(0,\infty)\rightarrow\R$, 
$\hat{f}_{\pm}(s)=\pm s+f_o$, $f_o\in\R$, that is, $\hat{w}_{\pm}(s)=\pm 1$.  

\subsection{The case when $\vert x\vert <1$}

The following particular example was studied in \cite{JLC}. 
\begin{lemma}\label{bowl} There exists a unique function $f_B:\mathbb {R}\to\mathbb{R}$ such that $f$ is even, analytical, and such that its derivative $w_B=f'$ is the unique solution to the boundary problem
\[ w'=\big(1- w^2\big)\Big(1-\frac{n-1}{s}w\Big), \quad w(0)=0.\]
\end{lemma}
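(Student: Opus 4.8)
The plan is to treat \eqref{unordenmenos} as a singular initial value problem and to desingularize it at $s=0$. Because of the term $\tfrac{n-1}{s}$, the condition $w(0)=0$ sits exactly where the equation is singular, so the Picard--Lindel\"of theorem does not apply directly. First I would multiply the equation by $s$ to bring it into the analytic form
\[ s\,w'(s)=F(s,w),\qquad F(s,w):=s\big(1-w^2\big)-(n-1)\,w\big(1-w^2\big), \]
where $F$ is a polynomial with $F(0,0)=0$ and $\partial_w F(0,0)=-(n-1)$. The structural fact on which everything rests is that $-(n-1)$ is \emph{not} a positive integer for $n\ge 2$; this non-resonance is precisely what makes the singular point manageable.

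For existence and analyticity I would look for a formal power series $w(s)=\sum_{k\ge 1}a_k s^k$. Substituting into $s\,w'=F(s,w)$ and matching the coefficient of $s^m$ gives, at first order, $n\,a_1=1$, hence $a_1=1/n$ (so that $w_B'(0)=1/n$, equivalently $f_B''(0)=1/n$); and at order $m\ge 2$ a relation of the form $(m+n-1)\,a_m=P_m(a_1,\dots,a_{m-1})$, where $P_m$ is a universal polynomial built from the quadratic and cubic terms of $F$. Since $m+n-1\ge n+1>0$ for every $m$, each $a_m$ is uniquely and recursively determined. Convergence near $s=0$ I would establish by Cauchy's method of majorants: the denominators $m+n-1$ are bounded below and $F$ is polynomial, so a geometric majorant yields a positive radius of convergence and hence an analytic solution $w_B$ with $w_B(0)=0$. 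Equivalently, and in the spirit of \cite{Wiggins}, one may read the autonomous planar system $\dot s=s$, $\dot w=F(s,w)$: the origin is a hyperbolic saddle with eigenvalues $1$ and $-(n-1)$, its one-dimensional unstable manifold is analytic and is a graph $w=w_B(s)$ tangent to the $s$-axis, and along it $dw/ds=F(s,w)/s$ is precisely \eqref{unordenmenos}.

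Uniqueness follows from the same non-resonance condition: the recursion fixes the $a_m$ with no remaining freedom once $w(0)=0$ is imposed, so any analytic solution vanishing at the origin must coincide with $w_B$; in the dynamical picture this is the uniqueness of the unstable manifold of the saddle. To obtain the symmetry I would set $g(s):=-w_B(-s)$ and verify directly that $g$ solves \eqref{unordenmenos} with $g(0)=0$; by uniqueness $g=w_B$, so $w_B$ is odd. Finally, defining $f_B(s):=\int_0^s w_B$ (normalizing $f_B(0)=0$, the function being otherwise determined up to an additive constant) produces an analytic function that is even because $w_B$ is odd, with $f_B'=w_B$ as required.

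The main obstacle is exactly the regular singular point at $s=0$: neither existence nor uniqueness is supplied by classical ODE theory there, and the whole argument hinges on desingularizing and on the indicial exponent $-(n-1)$ avoiding the positive integers (the Briot--Bouquet / unstable-manifold situation). Once that is secured, the coefficient recursion, the majorant estimate, and the parity argument are all routine.
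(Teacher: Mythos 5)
The paper does not actually prove this lemma: it is stated as a known result and deferred entirely to the reference \cite{JLC}, so there is no internal argument to compare against. Your proposal supplies a genuine, self-contained proof, and the core of it is sound: rewriting the equation as $s\,w'=F(s,w)$ with $F$ polynomial, $F(0,0)=0$ and $\partial_wF(0,0)=-(n-1)$ puts you squarely in the Briot--Bouquet setting, the non-resonance condition ($-(n-1)$ not a positive integer) makes the coefficient recursion $(m+n-1)a_m=P_m(a_1,\dots,a_{m-1})$ uniquely solvable, the majorant argument gives a positive radius of convergence, and the parity trick $g(s)=-w_B(-s)$ correctly yields oddness of $w_B$ and hence evenness of $f_B$. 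Two points need repair. First, a small internal inconsistency: you correctly compute $a_1=1/n$, but then describe the unstable manifold of the saddle as ``tangent to the $s$-axis''; the unstable eigenvector of $\left(\begin{smallmatrix}1&0\\1&-(n-1)\end{smallmatrix}\right)$ is $(n,1)$, so the invariant curve is tangent to the line $w=s/n$, consistent with your series. Second, the lemma asserts $f_B:\R\to\R$, i.e.\ a globally defined solution, whereas your construction only produces $w_B$ on a neighbourhood of $0$; you should add that the constant solutions $\hat w_{\pm}\equiv\pm1$ act as barriers, so the local solution satisfies $|w_B|<1$, remains bounded, and therefore extends (analytically, since the equation is regular and analytic for $s\neq0$) to all of $(0,+\infty)$, and then to all of $\R$ by oddness. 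With those two fixes the argument is complete and arguably more informative than the paper's citation, since it exhibits the Taylor coefficients and the exact mechanism (the indicial exponent avoiding the positive integers) that makes the singular initial condition $w(0)=0$ well posed.
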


We  consider the following open domains
\begin{gather*} \Omega = \{(s,x)\in \R^2 : s>0, \ \vert x\vert <1\}, \\ 
\Omega_1=\{(s,x)\in\Omega : w_B(s)>x\}, \quad  \Omega_2 =\{(s,x)\in\Omega : w_B(s)<x\}. 
\end{gather*}
\begin{lemma} \label{moduloxmenor1} 
Any unextendable solution $w$ inside $\Omega$ is globally defined, $w:(0,\infty)\to\R$, and it is one of the following:
\begin{description}
\item[Case I:] The solution $w_B=f':[0,\infty)\to \R$ of Lemma \ref{bowl};
\item[Case II:] $w>w_B$ everywhere, with one critical point $s_1>0$, such that $w(s_1)=s_1/(n-1)$, and $\lim_{s\to 0}w(s)=1$.
\item[Case III:] $w<w_B$ everywhere, without critical points, strictly increasing, with no critical points, and $\lim_{s\to 0}w(s)=-1$.
\end{description}
In addition, all of them (also $w_B$) satisfy $\lim_{s\to \infty}w(s)=1$.
\end{lemma}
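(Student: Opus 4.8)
The plan is to treat \eqref{unordenmenos} as a first-order equation and run a phase-plane analysis inside the strip $\Omega$. The key structural observation is that on $\Omega$ one has $1-w^2>0$, so the sign of $w'$ equals the sign of $1-\frac{n-1}{s}w=\frac{s-(n-1)w}{s}$, i.e. the sign of $s-(n-1)w$. Hence the only relevant nullcline is the line $L=\{w=s/(n-1)\}$: above $L$ every solution is strictly decreasing, below $L$ strictly increasing, and a critical point can occur only on $L$, where necessarily $w(s_1)=s_1/(n-1)$ (the alternative $w=\pm1$ is excluded by $|w|<1$). The constant functions $\hat w_{\pm}\equiv\pm1$ are solutions bounding $\Omega$, while $w_B$ from Lemma \ref{bowl} is an interior solution. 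Since the right-hand side of \eqref{unordenmenos} is smooth for $s>0$, uniqueness for the IVP forbids any solution from crossing $\hat w_{\pm}$ or $w_B$. This immediately yields the trichotomy: an unextendable solution inside $\Omega$ is either $w_B$ (Case~I), or lies strictly above $w_B$ in $\Omega_2$ (Case~II), or strictly below $w_B$ in $\Omega_1$ (Case~III).

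First I would establish global existence and the common asymptotics. Because such a solution stays in the bounded strip $|w|<1$ and cannot reach $\pm1$, the vector field stays bounded on compact $s$-intervals away from $0$, so standard continuation forces the maximal interval to be all of $(0,\infty)$; the endpoint $s=0$ is the only genuine singularity and is handled separately. For $s>n-1$ the line $L$ leaves the strip, so every solution is increasing and bounded above by $1$, hence converges to some $c\le1$; if $c<1$ then $w'\to(1-c^2)>0$, incompatible with convergence, so $c=1$. This gives $\lim_{s\to\infty}w=1$ in all three cases. Applied to $w_B$: near $0$ one checks $w_B(s)\sim s/n<s/(n-1)$, so $w_B$ starts below $L$; and since on $L$ a solution has slope $0<1/(n-1)$, no solution can pass from below $L$ to above $L$. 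Therefore $w_B$ stays below $L$, is strictly increasing, and has no critical point (so in particular $w_B(s)>0$ for $s>0$).

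The heart of the proof is the behaviour at the singular endpoint, analysed case by case. In Case~III the solution obeys $w<w_B<L$, so it lies below $L$ throughout, is strictly increasing, and has no critical point; running backwards it decreases and, being bounded below by $-1$, has a limit $\ell\in[-1,0]$. I rule out $\ell\in(-1,0)$ because then $\frac{n-1}{s}w\to-\infty$, forcing $w'\sim C/s$ with $C>0$ and hence $w\to-\infty$, contradicting $w>-1$; and $\ell=0$ is impossible since by Lemma \ref{bowl} the unique solution tending to $0$ is $w_B$. Thus $\ell=-1$. In Case~II ($w>w_B>0$) I would first argue the solution cannot be increasing on all of $(0,\infty)$: otherwise it lies below $L$ everywhere with a limit $\ell\ge0$ at $0$, which the same blow-up estimate (now excluding $\ell\in(0,1)$) and the uniqueness of $w_B$ (excluding $\ell=0$) both forbid. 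Consequently the solution must meet $L$; the slope comparison makes every crossing go from above $L$ to below $L$ as $s$ grows and never the reverse, so there is exactly one crossing $s_1$, the unique critical point, with $w(s_1)=s_1/(n-1)$. For $s<s_1$ the solution lies above $L$ (so $w>L>0$) and is decreasing, and the same dichotomy forces $\lim_{s\to0^+}w=1$.

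The step I expect to be the main obstacle is precisely this analysis at the singular point $s=0$: making the boundary limits $\pm1$ rigorous, controlling the $1/s$ blow-up of the field, and invoking the uniqueness from Lemma \ref{bowl} to identify the solution with $0$-limit as $w_B$. The companion difficulty is guaranteeing that a Case~II solution actually reaches $L$ — i.e. reconciling the ``decreasing near $0$'' and ``increasing for $s>n-1$'' regimes to produce exactly one critical point — which rests on monotonicity and slope bookkeeping rather than on any explicit solution of \eqref{unordenmenos}.
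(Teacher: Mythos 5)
Your proposal is correct and follows essentially the same strategy as the paper's proof: the constant barriers $\pm 1$ and the bowl solution $w_B$ give global existence and the trichotomy, critical points are located on the nullcline $x=s/(n-1)$, the limit at infinity comes from monotonicity plus $\lim w'=0$, and the limits $\pm 1$ at $s=0$ come from analysing the $1/s$ singularity together with the uniqueness statement of Lemma \ref{bowl} to exclude the limit $0$. The only (cosmetic) differences are that you control the endpoint $s=0$ by a direct integral estimate $w'\sim C/s$ where the paper substitutes $z(t)=w(e^t)$, and that your slope comparison on the nullcline makes the ``at most one crossing'' claim slightly more explicit.
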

\begin{proof} Take $(s_o,w_o)\in\Omega$  initial conditions for \eqref{unordenmenos}, $w(s_o)=w_o$, and let $w$ be the solution. By the solutions $\hat{w}_{\pm}$, it will be globally defined  $w:(0,\infty)\rightarrow\R$ and its graph will remain inside $\Omega$. Then, $w'(s)=0$ for some point $s\in (0,\infty)$ if, and only if, either $w(s)=\pm 1$ (excluded) or $w(s)=s/(n-1)$, at some point $s\in(0,\infty)$. In other words, $w$ will admit a critical point if its graph intersects the line $r$ of equation $x=s/(n-1)$ horizontally. As the line $r$ is a monotonic curve, the solution can only intersects it at most once due to our ODE. 

\noindent \textbf{Case I:} We already know the solution  $w_B=f':[0,\infty)\to \R$ of Lemma \ref{bowl}. 

\noindent \textbf{Case II:} Take initial conditions $(s_o,w_o)\in\Omega_1$. The associated solution $w:(0,\infty)\rightarrow \R$ will remain in $\Omega_1$, due to the uniqueness of solutions to IVP. As  $w(s)<w_B(s)<s/(n-1)$ for any $s>0$, then $w$ has no critical points, and $w'(s_o)>0$, 
so it is strictly increasing for any $s>0$. 

Since, $w(s)<1$ for any $s>0$, there exists $\lim_{s\to +\infty} w(s)=w_1\in (-1,1]$ and $\lim_{s\to +\infty}w'(s)=0$. However, $0=\lim_{s\to +\infty}w'(s)=\lim_{s\to +\infty} (1-w(s)^2)(1-(n-1)w(s)/s)$  $=1-w_1^2$. Therefore, $\lim_{s\to +\infty} w(s)=1.$

As  $w_B(0)=0$, and $w(s)<w_B(s)$, there exists $s_1>0$ such that $w(s_1)=0$. We want to compute the limit of $w(s)$ and of $w'(s)$ when $s\to 0$. As $-1<w(s)$ and $w$ is strictly increasing, then $\lim_{s\to 0}w(s)=w_2\in [-1,0)$. Define now $z:\R\rightarrow\R$, $z(t)=w(e^t)$. Then, $z$ is still strictly increasing, and $z'(t)=(1-z(t)^2)(e^t-(n-1)z(t))$. Clearly, $w_2=\lim_{t\to -\infty}z(t)$ and this implies $0=\lim_{t\to -\infty}z'(t)=\lim_{t\to -\infty} (1-z(t)^2)(e^t-(n-1)z(t)) = -(n-1)(1-w_2^2)w_2$. This means that $w_2=-1$. To summarize, 
\[ \lim_{s\to 0}w(s)=-1, \quad \lim_{s\to 0}w'(s)=0.\]

Again, by the uniqueness of solutions to IVP, we can parametrize this family by considering the initial values, $(0,+\infty)\ni s_0$, that is $(s_0,0)\in\Omega_1$, and the union of all the graphs of the solutions will foliate $\Omega_1$.\\

\noindent \textbf{Case III:} Take initial conditions $(s_o,w_o)\in\Omega_2$. The associated solution $w:(0,\infty)\rightarrow \R$ will remain in $\Omega_2$, similarly to Case II. If $w'(s)\neq 0$  for all $s$, its graph cannot cross the line $r$ of equation $x=s/(n-1)$ and it is strictly above the graph of $w_B=f_B'$. But then, either $w(s)>s/(n-1)$ for any $s>0$, which is impossible since $w$ is bounded by $1$ and defined for all $s>0$, and the line $r$ crosses the horizontal line $x=+1$; or 
$s/(n-1)>w(s)>w_B(s)$ for any $s>0$, which shows that $\lim_{s\to 0}w(s)=0$. Then, we found a second solution to the  problem \eqref{unordenmenos} with boundary value $w(0)=0$. By the uniqueness of the solution to this boundary problem, we get to a contradiction. Thus, there exists a unique $s_1>0$ such that $w(s_1)=s_1/(n-1)$, which is the only critical point. In this way, we  parametrize this family by the open segment $r\cap\Omega_2$. It is easy to compute that 
\[ w''(s_1) = \frac{-1}{s_1}\left(1-\frac{s_1^2}{(n-1)^2}\right)<0.
\]
Therefore, $s_1$ is a absolute minimum of $w$. This means that $w$ is strictly decreasing when $0<s<s_1$ and strictly increasing when $s>s_1$. For big $s>0$, we know $w_B(s)<w(s)<1$, which shows $\lim_{s\to +\infty}w(s)=1.$ By similar computations to Case II, $\lim_{s\to 0}w(s)=1$. 
\end{proof}

The following picture illustrates the solutions to \eqref{unordenmenos} such that  $-1<w=f'<1$, for $n=3$, made with wxMaxima\textcopyright. 
\begin{center}
\includegraphics[scale=0.35]{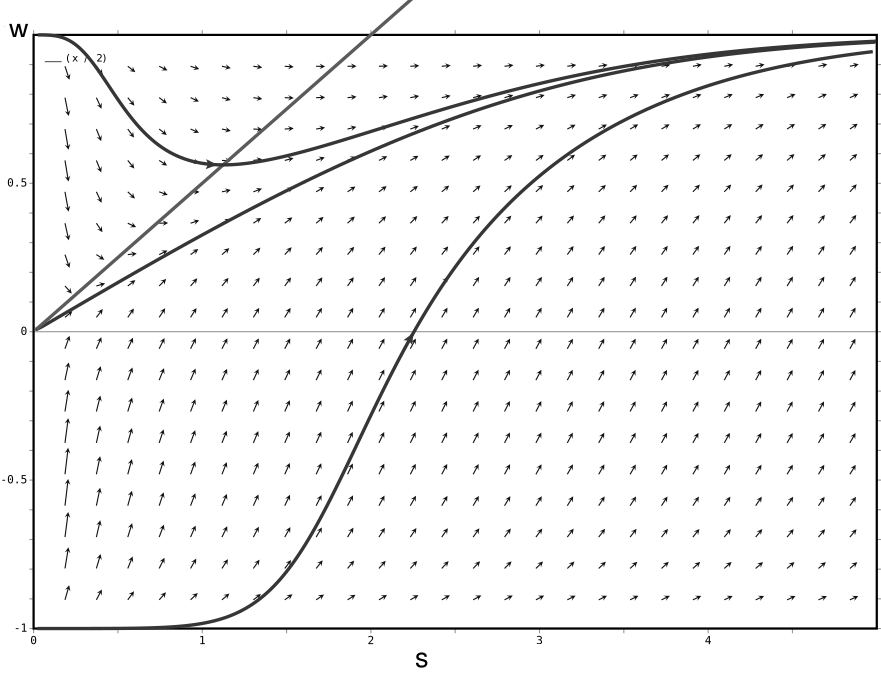}
\end{center}

\subsection{The case when $\vert x\vert >1$}

For the following study, we define the open domains
\[ \Gamma_{-} = \{(s,x)\in\R^2 : s>0, \ x<-1\}, \quad 
\Gamma_{+} = \{(s,x)\in\R^2 : s>0, \ x>1\}.\]
\begin{lemma} {\normalfont \textbf{(Case IV)}} \label{caseiv} Any unextendable solution $w$ contained in $\Gamma_{-}$ is defined on $w:(0,s_1)\to\R$, for some $s_1>0$, it is strictly increasing, with $\lim_{s\to 0}w(s)=1$ and having a finite time blow-up at $s_1$. 
\end{lemma}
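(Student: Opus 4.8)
The plan is to work in the region $\Gamma_{+}=\{(s,x):s>0,\ x>1\}$ (the one compatible with a solution satisfying $\lim_{s\to0}w=1$) and to read off the sign of $w'$ from the factorization in \eqref{unordenmenos}. Since $w>1$ forces $1-w^2<0$ throughout $\Gamma_+$, the sign of $w'$ is opposite to that of $1-\frac{n-1}{s}w$; equivalently $w'>0$ exactly when $w>s/(n-1)$, i.e. when the graph lies strictly above the line $r:\ x=s/(n-1)$. For $0<s\le n-1$ one has $s/(n-1)\le 1<w$ automatically, so the only place where monotonicity could fail is $s>n-1$, where $r$ enters $\Gamma_+$. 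I would therefore first prove that the solution stays above $r$ on its whole domain, which yields strict monotonic increase. For this I will use $d(s)=w(s)-s/(n-1)$: at any contact $d(s_*)=0$ one has $w'(s_*)=0$ and hence $d'(s_*)=-1/(n-1)<0$, so contacts are transversal and can occur only from $d>0$ to $d<0$; combined with the finite-time blow-up established below (which keeps $d\to+\infty$), this rules out the solution ever reaching $r$.

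Next I would pin down the left endpoint. Because $w$ is increasing and bounded below by $1$, as $s\to0^+$ it decreases to a finite limit $L:=\lim_{s\to0^+}w(s)\in[1,\infty)$. Mirroring the logarithmic substitution used in the proof of Lemma \ref{moduloxmenor1}, set $z(t)=w(e^t)$, which solves $z'(t)=(1-z(t)^2)\big(e^t-(n-1)z(t)\big)$. As $t\to-\infty$ one has $z\to L$ and, by monotonicity and boundedness, $z'\to0$; but the right-hand side tends to $(1-L^2)\big(-(n-1)L\big)=(n-1)L(L^2-1)$. Since $L\ge1$ and $n\ge2$, this can vanish only if $L=1$, giving $\lim_{s\to0^+}w(s)=1$ (and $\lim_{s\to0^+}w'(s)=0$). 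Hence the solution is defined down to, but not at, $s=0$.

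For the right endpoint I would prove finite-time blow-up by a cubic comparison. While the solution stays above $r$ we may write $w'=(w^2-1)\big(\tfrac{n-1}{s}w-1\big)>0$, and on a bounded interval $[s_0,s_1)$, once $w$ is large, $\tfrac{n-1}{s}w-1\ge\tfrac{n-1}{2s_1}\,w$ and $w^2-1\ge\tfrac12 w^2$, so that $w'\ge c\,w^3$ for some constant $c>0$. Comparison with the separable equation $y'=cy^3$, whose solutions escape to $+\infty$ in finite time, forces the existence of a finite $s_1>0$ with $\lim_{s\to s_1^-}w(s)=+\infty$. Since the graph never meets $r$, the solution is strictly increasing on all of $(0,s_1)$, which is precisely the claimed behavior.

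The main obstacle is the global organization of the phase portrait in $\Gamma_+$. In contrast with the region $x<-1$, where the nullcline $r$ is absent and every solution is automatically monotone, here $r$ enters for $s>n-1$ and at a contact point the solution has a local maximum, since $w''(s_*)=(1-w^2)\tfrac{n-1}{s^2}w<0$; one must therefore genuinely exclude a turning point for the solutions described by the statement. The delicate part is thus coupling the transversality of contacts ($d'<0$) with the a priori cubic growth so as to conclude that the solution escapes to $+\infty$ before it can descend to $r$. A minor technical point is the justification of $z'\to0$ in the logarithmic substitution, which I would obtain from the monotonicity and boundedness of $z$ near $t=-\infty$.
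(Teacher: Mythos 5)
Your proposal targets the wrong region, and the statement you end up proving is in fact false there. The lemma is explicitly about $\Gamma_{-}=\{(s,x):s>0,\ x<-1\}$; the words ``strictly increasing'' and the value $\lim_{s\to 0}w(s)=1$ in the printed statement are typos, not hints that the region should be $\Gamma_{+}$. Indeed, for a solution in $\Gamma_{-}$ one has $w<-1$, hence $1-w^2<0$ and $1-\tfrac{n-1}{s}w>1>0$, so by \eqref{unordenmenos} $w'<0$: the correct conclusions (and the ones the paper proves) are that $w$ is strictly \emph{decreasing}, $\lim_{s\to 0}w(s)=-1$, and $w\to-\infty$ at a finite $s_1$. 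The paper's proof is short precisely because the nullcline $r$ of equation $x=s/(n-1)$ does not meet $\Gamma_{-}$: critical points are impossible, so monotonicity is automatic; the constant solution $w\equiv -1$ is a barrier that lets the solution be extended to the left down to $s=0$ with limit $-1$ (by the same logarithmic substitution you use); and for $s\ge s_o$ one has $w'\le 1-w^2<0$, so comparison with $z'=1-z^2$, $z(s)=\coth(s+\rho)$ with $\rho<0$, forces finite-time blow-up. By contrast, in $\Gamma_{+}$ the assertion ``every unextendable solution stays above $r$, is strictly increasing, and blows up in finite time'' contradicts Lemma \ref{clasificacion-timelike}: Cases V and VI are globally defined; in particular the distinguished solution $\mathbf{w}$ of Case V lies above $r$ for all $s>0$, is strictly increasing, and never blows up. So no argument along your lines can close.

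The two places where your argument concretely breaks are the following. First, the logic is circular: you exclude contact with $r$ by invoking ``the finite-time blow-up established below,'' while that blow-up argument is only run ``while the solution stays above $r$.'' Neither half can be established first, and in fact contacts do occur: Case VI solutions touch $r$ at their unique maximum and then descend to $1$; your transversality computation $d'(s_*)=-1/(n-1)<0$ only shows that crossings go downward, it does not prevent them. Second, the cubic comparison is quantitatively wrong: $w>s/(n-1)$ gives only $\tfrac{n-1}{s}w-1>0$, not $\tfrac{n-1}{s}w-1\ge\tfrac{n-1}{2s_1}\,w$ (the latter essentially requires $w\ge 2s/(n-1)$, i.e.\ staying well above $r$), and it also presupposes a bounded interval $[s_0,s_1)$, which is part of what must be shown. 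The solution $\mathbf{w}$ is a counterexample to the estimate itself: since $(n-1)\mathbf{w}(s)-s\to 0$, one has $\tfrac{n-1}{s}\mathbf{w}(s)-1\to 0$, so no inequality of the form $w'\ge c\,w^3$ (nor even $w'\ge c\,w$) can hold along it. The repair is simply to prove the lemma where it lives: in $\Gamma_{-}$, with the sign-corrected conclusions, by the barrier-plus-comparison argument sketched above; your left-endpoint analysis via $z(t)=w(e^t)$ then carries over verbatim with limit $-1$ instead of $1$.
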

\begin{proof}
Let $(s_0,w_o)\in\Gamma_{-}$ initial solutions for \eqref{unordenmenos}, $w(s_o)=w_o<-1$. A local solution $w:(s_o-\delta,s_o+\delta)\to\mathbb{R}$ stays inside of  $\Gamma_{-}$. Since the line $r$ does not intersects $\Gamma_{-}$, $w$ cannot have any critical point. And due to  
\[ w'(s_o) = (1-w_o^2)\left(1 -\frac{(n-1)w_o}{s_o}\right)<0,
\]
then $w$ is always strictly decreasing. As it is bounded by the constant solution $\hat{w}_{-1}(s)=-1$, $w$ can be extended to $(0,s_o+\delta)$. Also, a simple computation shows $\lim_{s\to 0}w(s)=-1$. 
Next, since $1-(n-1)w_o/s_o>1$, for any $s\geq s_o$ we have,
\[ (1-w(s)^2)\left(1-\frac{(n-1)w(s)}{s}\right) \leq 1-w(s)^2<0. 
\]
Therefore, the ODE \eqref{unordenmenos} is dominated by the ODE $z'(s)=1-z(s)^2$, whose general solution is 
$z(s)=\coth(s+\rho)=\frac{e^{2(s+\rho)}+1}{e^{2(s+\rho)}-1}$, $\rho\in\R$. We choose $\rho$ such that $z(s_o)=w_o=w(s_o)<-1$. From here, $e^{2(s_o+\rho)}-1<0$ and  $\rho<0$, so that $z$ can be extended at most to $(0,-\rho)$. But as $w'(s)\leq z'(s)<0$, then $w$ has a finite-time blow-up at some $s_1<-\rho$.
\end{proof}
\newcommand{\w}{\mathbf{w}}
\begin{lemma}\label{clasificacion-timelike} Any unextandable solution to \eqref{unordenmenos} included in $\Gamma_+$ is one of the following:
\begin{description}
\item[Case V:] There exists a unique $\w:(0,+\infty)\to\R$ which is  strictly increasing, asymptotic to the line $r$ of equation $x=s/(n-1)$ at infinity, and $\w(s)>s/(n-1)$ for any $s>0$.
\item[Case VI:]  $w:(0,+\infty)\to\mathbb{R}$ is globally defined, with just one critical point $s_1>s_o$, $\lim_{s\to +\infty}w(s)=1$, $w(s)<\w(s)$ for any $s>0$.
\item[Case VII:] $w:(0,s_1)\to\mathbb{R}$ has a finite time blow up at certain $s_1>s_o$, without critical points, and $w(s)>\w(s)$ for any $s<s_1$.
\end{description}
In addition, all of them satisfy (also $\w$), $\lim_{s\to 0}w(s)=1$ and $\lim_{s\to 0}w'(s)=0$.
\end{lemma}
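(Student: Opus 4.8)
The plan is to read off the dynamics of \eqref{unordenmenos} in the strip $\Gamma_+=\{w>1\}$ from the sign of $w'$, and then to isolate a single \emph{separatrix} solution $\w$ by a shooting/monotonicity argument, everything else being forced by the ordering of solutions. First I would set up the phase picture. Since $w>1$ makes $1-w^2<0$, the sign of $w'$ is opposite to that of $1-\tfrac{n-1}{s}w$; hence $w'>0$ strictly above the line $r:x=s/(n-1)$, $w'<0$ strictly below it, and inside $\Gamma_+$ the only way to have $w'(s)=0$ is $w(s)=s/(n-1)$, which forces $s>n-1$. As in Lemma \ref{moduloxmenor1}, the constant solution $\hat w_{+}=1$ forbids any solution from leaving $\Gamma_+$ downward, so solutions stay in $\{w>1\}$. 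Setting $G=w-s/(n-1)$, at every zero of $G$ one has $G'=w'-\tfrac1{n-1}=-\tfrac1{n-1}<0$; thus $G$ can vanish at most once and only by passing from positive to negative, i.e. a solution meets $r$ at most once and from above. A direct computation at such a point $s_1$ gives $w''(s_1)=\big(1-w(s_1)^2\big)/s_1<0$, so the crossing is a strict local maximum. This already produces the dichotomy: a solution either stays strictly above $r$ (strictly increasing, no critical point) or it crosses $r$ once at a maximum $s_1$ and is strictly decreasing for $s>s_1$, where, bounded below by $1$, it is globally defined.

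Next I would treat the endpoints. For $s>s_1$ decreasing and bounded below by $1$, the nullcline argument of Lemma \ref{moduloxmenor1} (passing to $z(t)=w(e^t)$) forces $\lim_{s\to\infty}w=1$, giving Case VI. For a solution staying above $r$ there are two possibilities: writing $w'=(w^2-1)(\tfrac{n-1}{s}w-1)$ and bounding it below by $\kappa\,w^3$ on a bounded $s$-interval (the same $\coth$-type comparison as in Lemma \ref{caseiv}) shows that if $w$ is large enough it blows up at a finite $s_1$ while remaining above $r$, giving Case VII; otherwise it exists for all $s$. The behaviour as $s\to0^+$ is uniform: for small $s<n-1$ every solution of $\Gamma_+$ lies above $r$ and is increasing, hence as $s\to0$ it is decreasing and bounded between $1$ and its value at some fixed $s_0$, so it extends to $s\to0^+$; the nullcline computation for $z(t)=w(e^t)$ forces the limit to be $1$ and the resulting asymptotics give $\lim_{s\to0}w'=0$.

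The hard part is the existence and uniqueness of the separatrix $\w$ (Case V). I would fix $s_0\in(0,n-1)$ and parametrize solutions by $c=w(s_0)\in(1,\infty)$; by uniqueness of the IVP they are totally ordered and never cross. The set $\mathcal A$ of parameters whose solution blows up in finite time is nonempty (large $c$, by the $\coth$-comparison), upward closed and open; the set $\mathcal B$ of parameters whose solution crosses $r$ (Case VI) is nonempty (small $c$, where the slowly growing solution is overtaken by the rising line $r$), downward closed and open. Since $(1,\infty)$ is connected, there is a threshold $c^{*}=\sup\mathcal B=\inf\mathcal A$ lying in neither set; its solution $\w$ neither blows up nor meets $r$, hence stays strictly above $r$ on all of $(0,\infty)$ and is strictly increasing. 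Squeezed between crossing solutions below and blow-up solutions above, $\w$ must hug $r$, so $\w(s)-s/(n-1)\to0$, which is the asymptotic claim. Uniqueness follows from the total ordering, since two distinct separatrices would sandwich a solution that is simultaneously of Case VI and Case VII. The genuinely delicate points, where I expect to spend the most effort, are the openness of $\mathcal A$ and $\mathcal B$ via continuous dependence on initial data (so that $c^{*}$ really falls through the gap) and upgrading ``stays above $r$ for all $s$'' into the quantitative asymptotic $\w(s)\sim s/(n-1)$.
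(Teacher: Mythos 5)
Your strategy coincides with the paper's: both are shooting arguments in the totally ordered one-parameter family of solutions through a fixed vertical line, with the separatrix $\mathbf{w}$ sitting at the threshold between the solutions that cross $r$ (Case VI) and those that blow up (Case VII); the paper takes $A=\sup J$, where $J$ is the set of heights at $s=n-1$ producing global solutions, instead of your pair of open sets $\mathcal{A}$, $\mathcal{B}$. Your phase-plane preliminaries (sign of $w'$ relative to $r$, at most one crossing of $r$ and only from above, the sign of $w''$ at a crossing, the barrier $w=1$, the limits as $s\to 0^+$ and the limit $w\to 1$ at infinity in Case VI) are correct and essentially identical to the paper's. However, the two steps you defer are exactly where your proposed mechanisms fail, so these are genuine gaps rather than routine details.

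First, uniqueness: a solution sandwiched between two distinct separatrices is bounded above by the upper one (so it cannot blow up) and below by the lower one (so it can never reach $r$); it is therefore a \emph{third} separatrix, not ``simultaneously of Case VI and Case VII''. Your connectedness argument only shows that the complement of $\mathcal{A}\cup\mathcal{B}$ is nonempty — it could a priori be an interval, and the equality $\sup\mathcal{B}=\inf\mathcal{A}$ is precisely the uniqueness claim, not a consequence of the topology. Second, the asymptotics: being squeezed between crossing solutions (which only certify $\mathbf{w}\geq r$) and blow-up solutions (which leave every compact set in finite time) yields no upper bound on $\mathbf{w}(s)-s/(n-1)$. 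The paper closes both gaps with one additional analytic input: for any global, critical-point-free solution, if $\mathbf{w}'(s)\geq K/(n-1)$ with $K>1$ for all large $s$, then $\mathbf{w}(s)\geq Ks/(n-1)+c$ and \eqref{unordenmenos} dominates $g'=c_0(g^2-1)$, forcing finite-time blow-up; hence $\lim_{s\to\infty}\mathbf{w}'(s)=1/(n-1)$, and rewriting \eqref{unordenmenos} as
\[
(n-1)\mathbf{w}(s)-s=\frac{s\,\mathbf{w}'(s)}{\mathbf{w}(s)^2-1}
\]
and letting $s\to\infty$ gives $\mathbf{w}(s)-s/(n-1)\to 0$. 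This asymptotic holds for \emph{every} putative separatrix; since the right-hand side of \eqref{unordenmenos} is increasing in $w$ on $\{w>1,\ w>s/(n-1)\}$, the difference of two separatrices would be positive, strictly increasing, and tending to $0$ — a contradiction, which is the paper's uniqueness proof. You would need to incorporate both arguments (or equivalents). The openness of the blow-up set $\mathcal{A}$, which you rightly flag as delicate, can be bypassed as in the paper: if the threshold solution blew up at $s_1$, compare it with a solution blowing up at $s_1+\varpi$ and use the ordering to contradict the definition of the supremum.
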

\begin{proof}
Take $(s_o,w_o)\in\Gamma_{+}$  initial conditions for \eqref{unordenmenos},  and let $w$ be the solution such that $w(s_o)=w_o$. Its graph will remain inside $\Gamma_{+}$. Also, $w$ will have a critical point if its graph intersects the line $r$ horizontally ($w'(s_1)=0$ for some $s_1>0$), i.~e, $w(s_1)=s_1/(n-1)$.  For any point $s>s_1$, $w'(s)<0$, whereas for any $s<s_1$,  $w'(s)>0$ holds. In such case, $s_1$ will be the only critical point, and its absolute maximum. 
\newcounter{step}
\newcommand{\step}{\refstepcounter{step}
\par\noindent\textsc{Step \thestep:}\, }

\step Consider $(s_o,w_o)\in \Gamma_{+}$ such that $w_o=s_o/(n-1)$, and a solution to \eqref{unordenmenos}  around $s_o$, namely $w:(s_o-\delta,s_o+\delta)\rightarrow\R$, with $w(s_o)=w_o=s_o/(n-1)$. By \eqref{unordenmenos}, $w_o'(s_0)=0$, and it is its unique critical point. 

Then, $w$ will always be decreasing for $s>s_0$. 
By the barrier solution $w_1(s)=+1$, $w$ can be extended to $w:(s_o-\delta,+\infty)\rightarrow\R$, with $\lim_{s\to +\infty}w(s)=w_1\geq 1$, so that $0=\lim_{s\to +\infty}w'(s)=\lim_{s\to +\infty}(1-w(s)^2)\big(1-(n-1)w(s)/s\big)=1-w_1^2$. This means
$$\lim_{s\to +\infty}w(s)=+1.$$
On the other hand, $w$ will always be increasing for $s<s_o$. Hence we can extend $w:(0,+\infty)\to\R$, $w$ increasing on $(0,s_o)$, and $w(0)=w_2\geq 1$. As in Case II, by the change of variable $s=e^t$, we get immediately that 
$\lim_{s\to 0}w(s)=+1.$

\step As in Corollary \ref{oide}, we get for each $s_0>0$ a solution $w=f_-'$ such that $\lim_{s\to s_0}w(s)=+\infty$, 
i.e. with finite time blow up in $s_0$. As in step 1, $w$ can be extended to $(0,s_0)$ and  similarly 
$\lim_{s\to 0}w(s)=+1.$

\step We want to obtain the first type solution as explained above. 
We fix now $s_o=n-1$. Define 
\[J=\{w_o\in \R: w_o>1, \ \exists w:(0,+\infty)\rightarrow\R, \ \mathrm{solution\ to}\ \eqref{unordenmenos},  \ w(n-1)=w_o\}.\]
By step 1, $J\neq\emptyset$. By step 2 we have a solution $w_n:(0,n)\to \mathbb{R}$, such that $\lim_{s\to n}w_n(s)=+\infty$. Then,  we have that $j\leq w_n(n-1)$ for all $j\in J$. Thus, we can define $A:=\sup J<w_n(n-1)$. 

Now define $\w :(0,n-1+\delta)\to\mathbb{R}$ the solution to \eqref{unordenmenos} such that  $\w (n-1)=A$. Assume that $\lim_{s\to s_1} \w =+\infty$ for some $s_1>n-1$. But then let $s_2=s_1+\varpi$, $\varpi>0$. By step 2, there exists a blow up solution $w_2$, such that $\lim_{s\to s_2}w_2(s)=\infty$. Therefore $\w >w_2$ for all $s\in (0,s_1]$. This is a contradiction with $A=\sup J$. This means that $\w $ is globally defined.

Moreover, $\w $ has no critical points. Indeed, assume that there is a $s_c>0$, such that $\w '(s_c)=0$. As before , $(s_c,w(s_c))\in r$. Then,  by step 1 there exists another globally defined solution $w$, such that $w'(s_c+1)=0$, but then by uniqueness of solutions $w>\w $ for all s, which is a contraction to $A=\sup J$.  Therefore $\w $ is strictly increasing and bounded below by the line $r$, so that 
$$\lim_{s\to +\infty}\w (s)=+\infty.$$

We are going to show that $\w $ is asymptotic to the line $r$. Let $K>1$. Suppose that for large enough $s_1>0$, $\w '(s)>K/(n-1)$ for any $s\geq s_1$. Then, there exists $c\in \mathbb{R}$ such that for all $s\geq s_1$, $\w (s)>Ks/(n-1)+c$. But now, there exists $c_o>0$ such that $\frac{(n-1)\w (s)}{s}-1 > \frac{n-1}{s}\left(\frac{Ks}{n-1}+c\right)-1=K-1+\frac{(n-1)c}{s}>c_0>0$ for large enough $s\geq  s_2\geq s_1$. And so, $\w '(s)>c_o\left(\w (s)^2-1\right)$ for any $s\geq s_2$.  Then, \eqref{unordenmenos} dominates the ODE $g'=c_o(g^2-1)$. The solution to this equation such that $g(s_2)=w_o$ is given by 
\[ g(s)= \frac{e^{-2c_o(s-s_2)}+a}{e^{-2c_o(s-s_2)}-a}, \quad a=\frac{w_o-1}{w_o+1}<1. 
\]
Since $0<a<1$, $g$ is well defined and positive. Therefore, for any $s\geq s_2$, it holds $\w (s)\geq g(s)$. However, $g$ has a blow up at $s_3=s_2-\ln(a)/(2c_o)$. This shows that $\w $ has a blow up at $s_4<s_3$. This is a contradiction.  Therefore, since $\w $ cannot cross $r$, we get using  L'Hospital that 
\[\lim_{s\to +\infty}\w '(s)=\frac{1}{n-1}.\] 
Now we notice that since $\w >1$, \eqref{unordenmenos} is equivalent to the expression
$$(n-1)\w (s)-s=\frac{s\w '(s)}{\w (s)^2-1}.$$
But now, as $\w $ is defined until $+\infty$, and without critical points,  
\begin{align*}
& \lim_{s\to +\infty} [(n-1)\w (s)-s] = \lim_{s\to +\infty} \frac{s\,\w '(s)}{\w (s)^2-1} =\frac{1}{n-1} \lim_{s\to \infty} \frac{s}{\w (s)^2-1} \\
& = \frac{1}{n-1}\lim_{s\to +\infty} \frac{1}{2\w (s)\w '(s)}=0.
\end{align*}
Summing up, the solution $\w $ is asymptotic to the line $r$. 

Since $A=\sup J$, take any other globally defined solution $w_B:(0,+\infty)\to \mathbb{R}$ such that $w_B(n-1)<\w (n-1)$ and $w_B<\w $ everywhere. Assume that $w_B$ has no critical points. By similar computations as above, $w_B$ will also be asymptotic to the line $r$. Then, the difference $h=\w -w_B$ will satisfy $h>0$ and $\lim_{s\to +\infty}h(s)=0$. By \eqref{unordenmenos}, $\w '>w_B'$ everywhere. But then, $h'>0$, that is to say, $h$ is strictly increasing everywhere. This is a contradition with $\lim_{s\to +\infty}h(s)=0$. 

In other words, there is a unique solution $\w $ globally defined on $(0,+\infty)$, with no critical points. We will keep this notation for the rest of the paper. 

Finally, given $(s_o,w_o)\in \Gamma_{+}$, such that $\w (s_o)<w_o$, consider the solution to \eqref{unordenmenos} such that $w(s_o)=w_0$. As before, there exists $w:(0,s_o+\delta)\to\R$ for some (small) $\delta$. Assume that $w$ is globally defined on $(0,+\infty)$. Then, $w(n-1)>\w (n-1)=A$. This is a contradiction. Therefore, any other solution over $\w $ admits a finite-time blow-up. 
\end{proof}
The following picture illustrates this lemma, for $n=3$, made with wxMaxima\textcopyright. 
\begin{center}
\includegraphics[scale=0.35]{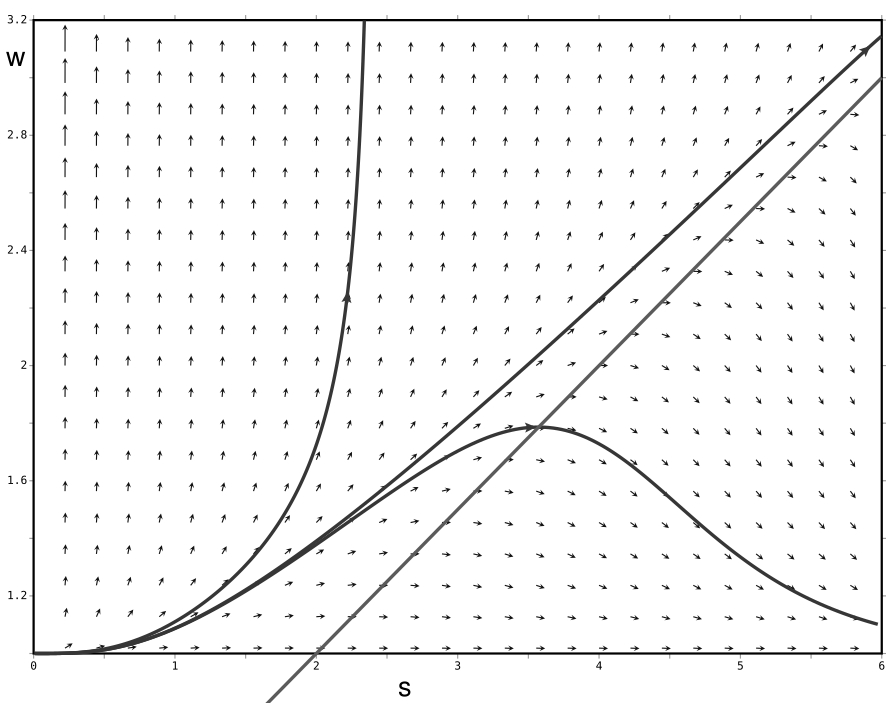}
\end{center}

\section{The Action of the Group $SO(n)$}
\label{Rotational}

We consider $\R^n$ with usual coordinates $x=(x_1,\ldots,x_n)$. To obtain  a smooth Riemannian submersion, we restrict it to $\pi:M=\R^n\backslash\{0\}\to (0,+\infty)$, $\pi(x)=\sqrt{\sum_i x_i^2}$. Then,  
\[ \grad \pi (x)= \sum_i \frac{x_i}{\pi(x)}\partial_i\vert_x, \quad \vert \grad\pi(x)\vert^2 = \sum_i\frac{x_i^2}{\pi(x)^2} = 1, \quad \mathrm{div}(\grad \pi)(x) = \frac{n-1}{\pi(x)},
\]
for any $x\in M$. Note that our map $\pi$ is invariant by the standard action of $SO(n)$ on $\R^n$. Consider now the Minkowski space $\mathbb{L}^{n+1}=\R^n\times\R\ni (x_1,\ldots,x_{n+1})$ with standard flat metric $g=\sum_{i=1}^ndx_i^2-dx_{n+1}^2$. In order to obtain a graphical $SO(n)$-invariant translating soliton, we consider $u=f\circ\pi:M\rightarrow\R$, where $f:I\subset (0,+\infty)\rightarrow\R$ is the desired function, which will be a solution to the ODE \eqref{keyODE}, which turns out to be 
\begin{equation} \label{yasesabe2}
f''(s)=\big(1- f'(s)^2\big)\Big(1-\frac{n-1}{s}f'(s)\Big).
\end{equation}
Then, we resort to Section \ref{soluODE}, and make a geometrical interpretation. Clearly, the submersion $\pi$ is Riemannian, hence $\tilde{\ep}=1$; 
the product is Lorentzian, and therefore $\ep'=-1$. Moreover, $\ep = \mathrm{sign}(-1+\vert\grad u\vert^2) = \mathrm{sign}(-1+(f'(\pi))^2)=\pm 1$.  If $\ep=-1$,  the rotational surface is spacelike, whereas $\ep=+1$ gives rise to a timelike surface. 
The spacelike examples are characterized by $(f')^2<1$, since $\varepsilon=-1$, but the timelike examples are those coming from $(f')^2>1$. 
\begin{examp}\normalfont From the above section, from each (inextendible) solution $w$ to \eqref{keyODE}, we consider a primitive $f=\int w$, which will be a solution to \eqref{yasesabe}. Then, the hypersurface 
\[ \Sigma_f:\Omega\subset \R^n\to \L^{n+1}, \quad \phi(x)=(x,f(\pi(x)))
\]
is a $SO(n)$-invariant translating soliton. We are extending continously $\pi$ to the rotation axis such that $\pi(x)=0$ when $x$ is a point of the axis. By the limits of $w$ at $0$ (Lemmatta \ref{caseiv} and \ref{clasificacion-timelike}), all these solutions $f$ are asymptotic at the axis to a light-like straight line of equation $x=\pm s+s_2$. Thus, all $\Sigma_{f}$ will be asymptotic at the axis of rotation to a light-like cone.  
\begin{itemize}
\item \textit{The Ogival Paraboloid:} $\Omega=\R^n$. Case V in Lemma \ref{clasificacion-timelike}, the function  $f=\int \w$ is asymptotic at infinity to an \textit{Euclidean parabola}
\item \textit{Timelike Calyx:} $\Omega=\R^n$. Case VI of Lemma \ref{clasificacion-timelike},  since $w$ satisfies $1=\lim_{s\to 0}w(s)=\lim_{s\to \infty}w(s)$, then $\Sigma_{f}$ will be asymptotic to two upper light-like cones, one at the origin and one at infinity. 
\end{itemize}
\end{examp}
\begin{examp} \textit{The spindle:} \normalfont We consider an example of Lemma \ref{caseiv}, Case IV, $w$ and $f_{-}=\int w$.  Since $w<-1$, then $f_{-}$ is injective, so we can consider its inverse $\alpha=f_{-}^{-1}$. Both functions are strictly decreasing. As $f_{-}$ is defined on $(0,s_o)$, there exists $\alpha:(a,b)\to\R$ such that $\lim_{y\to a}\alpha(y)=s_o$ and $\lim_{y\to b}\alpha(y)=0$.  We are going to use the proof of Corollary \ref{oide}. Then, $\alpha$ is a solution to the ODE
\[
\alpha''(y) = \left(-1 + \alpha'(y)^2\right) \left( \frac{n-1}{\alpha(y)} - \alpha'(y)\right).
\]
Assume  $a=-\infty$. Since $\lim_{s\to s_o}w'(s)=-\infty$, then $\lim_{y\to -\infty}\alpha'(y)=0$, so that it also holds $\lim_{y\to -\infty}\alpha''(y)=0$. By inserting this in the ODE, $0=\lim_{y\to -\infty} \alpha''(y)=(1-n)/s_o$. This is a contradiction. 

Therefore, $a\in\R$. Thus, we can extend $\alpha$ a little, $\alpha:(a-\epsilon,b)\to\R$. But at $a$, $\alpha'(a)=0$, and $\alpha(a)=s_1\in(0,s_o)$. By the proof of Corollary \ref{oide}, $\alpha$ can be split in two inverse functions, one after $a$ (which is the original $f_{-}$) and one before $a$, which we call $f_{+}$. Now, it is quite clear that $w_{+}={f}_{+}'>1$ provides an example of Case VII in Lemma \ref{clasificacion-timelike}. This means that the union of these two examples make a new type of translating soliton. This hypersurface is a (topological) sphere with two lightlike-conic  singularities, namely, asymptotic to two different light-like cones, one in the upper half and one in the lower half, with both vertices on the rotation axis. We will call it a \textit{fusiform hypersurface} or a \textit{spindle}. 
A similar symmetric reasoning holds if we start with an example of Case VII in Lemma \ref{clasificacion-timelike}, arriving to an example of Lemma \ref{caseiv}. 
\end{examp}

\begin{theorem}\label{timelike} Up to isometries, any  timelike, $SO(n)$-invariant translating soliton in $\L^{n+1}$, $n\geq 2$, is an open subset of either an Ogival Paraboloid, or a Timelike Calyx, or a Spindle. 	
\end{theorem}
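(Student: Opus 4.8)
The plan is to translate the geometric classification into the ODE analysis of Section \ref{soluODE} by passing to the profile curve, to read off the globally graphical solitons directly, and then to reassemble the graphical arcs across the points where the soliton stops being a graph.

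First I would set up the dictionary between causal character and profile. By the decomposition \eqref{equ:decom} with $\Sigma=SO(n)$, a connected, timelike, $SO(n)$-invariant translating soliton is the $SO(n)$-orbit of a curve in the Lorentzian half-plane $\{(s,t):s>0\}$ with $\produ{,}=ds^2-dt^2$ (here $\tilde\ep=1$ and $\ep'=-1$). Since the orbit directions are spacelike, the soliton is timelike exactly when the profile curve is timelike; on an arc where the tangent is not vertical we write $t=f(s)$, and timelike means $1-f'(s)^2<0$, i.e. $|f'|>1$, in agreement with $\ep=+1$. By Theorem \ref{equiv_ODE} such an $f$ solves \eqref{yasesabe2}, so $w=f'$ is a solution of \eqref{unordenmenos} whose graph lies in $\Gamma_-\cup\Gamma_+$.

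Next I would classify the maximal graphical arcs. By Lemma \ref{caseiv} and Lemma \ref{clasificacion-timelike}, a maximal solution $w$ of \eqref{unordenmenos} with $|w|>1$ is exactly one of Cases IV--VII, and all of them satisfy $w\to\pm1$ as $s\to0$, so every graphical arc is asymptotic to a lightlike cone at the rotation axis. Cases V and VI are globally defined on $(0,\infty)$ with no finite blow-up: integrating $\mathbf{w}$ (Case V) gives the globally graphical Ogival Paraboloid, and integrating the Case VI solution gives the globally graphical Timelike Calyx. Neither has a finite vertical tangent, so each is already a maximal timelike soliton presented as a graph.

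The heart of the argument, and the step I expect to be the main obstacle, is the treatment of Cases IV and VII, where $w$ has a finite-time blow-up at some $s_1$; here the profile curve acquires a vertical tangent at the \emph{finite} point $(s_1,f(s_1))$, finiteness coming from the blow-up rate $w\sim(s_1-s)^{-1/2}$, which keeps $\int w$ convergent. At such a point the graphical chart breaks down and one must pass to the inverse parametrization $s=\alpha(t)$, exactly as in Corollary \ref{oide} and in the Spindle construction: $\alpha$ solves the reciprocal ODE with $\alpha'=0$ at the turning point, and across it the profile splits into two graphical branches whose derivatives are a Case IV solution on one side and a Case VII solution on the other. By uniqueness of the IVP for \eqref{unordenmenos} together with the classification of Lemmata \ref{moduloxmenor1}--\ref{clasificacion-timelike}, a Case IV arc can only be continued into a Case VII arc and conversely, and each branch past its single turning point is monotone and asymptotic to a light cone as $s\to0$. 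The delicate point to verify carefully is that exactly two graphical pieces are glued, with no further turning points, so that the configuration closes up into a Spindle (a topological sphere with two lightlike-conic singularities on the axis) rather than extending indefinitely.

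Finally I would conclude by uniqueness. Any connected timelike $SO(n)$-invariant translating soliton has a profile curve which, on each maximal graphical subarc, coincides with one of the classified solutions; by uniqueness of solutions to the initial- and boundary-value problems of Section \ref{soluODE} and the gluing just described, the whole profile is contained in the profile of exactly one of the three maximal surfaces. Up to a vertical translation $t\mapsto t+c$ and the reflection $t\mapsto-t$ (isometries of $\L^{n+1}$ preserving the $SO(n)$-action, absorbing the integration constant and the orientation), the soliton is therefore an open subset of an Ogival Paraboloid, a Timelike Calyx, or a Spindle.
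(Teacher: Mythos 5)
Your proposal is correct and follows essentially the same route as the paper, which leaves the proof of Theorem \ref{timelike} implicit in the preceding examples: reduce to \eqref{unordenmenos} via Theorem \ref{equiv_ODE} with $\tilde\ep=1$, $\ep'=-1$ and $\vert f'\vert>1$, invoke the exhaustive classification of Lemmata \ref{caseiv} and \ref{clasificacion-timelike}, match Case V to the Ogival Paraboloid and Case VI to the Timelike Calyx, and glue Cases IV and VII across the vertical tangent via Corollary \ref{oide} to obtain the Spindle. Your explicit check of the blow-up rate $w\sim(s_1-s)^{-1/2}$, guaranteeing the turning point occurs at finite height, is a detail the paper handles only implicitly (by showing $a\in\R$ in the Spindle example).
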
 

The wing-like technique used in the previous theorem makes no sense for spacelike hypersurfaces, because at some point, the hypersurfase would be parallel to the timelike rotation axis. 

\begin{remark}
In \cite{coreano}, independently, the author obtained the \textbf{spacelike} examples, obtaining three types. In Proposition 3.2, the profile curve of type 2 gives rise to the \textit{Bowl}. However, from our point of view, the geometrical reconstruction of the two new surfaces can be slightly improved. Indeed, a profile curve of type 1 in Proposition 3.2 will hit the axis by a $-\pi/4$ angle. However, if one tries to extend it beyond the axis, by rotating the curve, we must arrive again to one of the three cases of Proposition 3.2. But now, the angle will be $\pi/4$, so this extension has to be a profile curve of type 3. This reasoning can be reversed, starting from a profile curve of type 3, and ending up with a profile curve of type 1. Moreover, the \textit{timelike} examples were not considered in this paper. 
\end{remark}

\begin{theorem}\label{spacelike} Up to isometries, any $SO(n)$-invariant, spacelike, translating soliton in $\L^{n+1}$, $n\geq 2$, is an open subset of a hypersurface generated by the graph of any of the three cases in Lemma \ref{moduloxmenor1}.
\end{theorem}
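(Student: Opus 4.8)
The plan is to reduce the classification to the purely analytic statement of Lemma \ref{moduloxmenor1}, exploiting the fact that the spacelike condition forces the profile curve to be a graph over the radial variable. First I would observe that any $SO(n)$-invariant hypersurface of $\L^{n+1}=\R^n\times\R$ is swept out by a profile curve $\gamma(\lambda)=(s(\lambda),t(\lambda))$ living in the half-plane $\{s>0\}$ (with $s=\pi$ the Euclidean radius and $t=x_{n+1}$), the orbit over each point being the round sphere $\mathbb{S}^{n-1}$ of radius $s(\lambda)$. Since the orbits lie in the spacelike factor $\R^n$, they are spacelike, while the profile direction $\gamma'(\lambda)=(s'(\lambda)\hat r,t'(\lambda))$ has squared norm $s'(\lambda)^2-t'(\lambda)^2$. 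Hence the hypersurface is spacelike exactly when $s'^2-t'^2>0$ along $\gamma$.

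The key step is to note that $s'^2-t'^2>0$ immediately excludes $s'=0$, since a vanishing $s'$ would force $-t'^2>0$, which is impossible. Geometrically this is the remark following Theorem \ref{timelike}: a spacelike $SO(n)$-invariant surface can never have a tangent plane parallel to the timelike rotation axis $\{0\}\times\R$, so the wing-like gluing used for the timelike classification is unavailable here. Consequently $s'$ never vanishes, $\gamma$ can be reparametrized by $s$, and the surface is globally a graph $t=f(s)$ with $f'^2<1$; that is, $u=f\circ\pi$ with $|f'|<1$.

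With graphicality in hand, Theorem \ref{equiv_ODE} (taking $\tilde\ep=1$, since $\pi$ is a Riemannian submersion, and $\ep'=-1$, since the product is Lorentzian) shows that $f$ solves \eqref{yasesabe2}, equivalently $w=f'$ solves \eqref{unordenmenos}, and the bound $|f'|<1$ means the graph of $w$ stays inside the domain $\Omega$. I would then invoke Lemma \ref{moduloxmenor1}: every unextendable solution of \eqref{unordenmenos} inside $\Omega$ is exactly one of Cases I, II or III. Since a vertical translation $t\mapsto t+c$ is an isometry of $\L^{n+1}$ preserving $SO(n)$-invariance and leaving $w=f'$ unchanged, the function $f$ is determined, up to such an isometry, by its derivative $w$. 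Therefore the given soliton is an open subset of one of the three rotational hypersurfaces generated by Cases I--III, which is the assertion.

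The main obstacle, I expect, is not the analysis, which is entirely packaged in Lemma \ref{moduloxmenor1}, but the careful justification that the spacelike hypothesis rules out vertical tangents and hence yields a global graph. One must also handle the bookkeeping near the axis $s\to 0$, where $\pi$ fails to be smooth: confirming that Case I extends smoothly across the axis while Cases II and III are only asymptotic to the light cone, and that in every situation the phrase \emph{open subset} absorbs these boundary behaviors.
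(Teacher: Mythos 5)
Your proposal is correct and follows essentially the same route as the paper, which leaves this theorem without a displayed proof precisely because it is the combination of the identification spacelike $\Leftrightarrow (f')^2<1$ from Section \ref{Rotational}, the reduction to \eqref{yasesabe2} via Theorem \ref{equiv_ODE}, the classification in Lemma \ref{moduloxmenor1}, and the remark that the wing-like gluing is unavailable since a spacelike hypersurface cannot have a tangent plane parallel to the timelike rotation axis. Your explicit justification that $s'$ never vanishes along the profile curve is exactly the paper's stated reason, so the argument matches.
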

Similarly to the Euclidean case, we  call the hypersurface generated by case I, the \textit{Bowl}.

\begin{corollary} The bowl is the only entire, $SO(n)$-invariant, graphical translating soliton  in $\L^{n+1}$, $n\geq 2$, up to isometries.
\end{corollary}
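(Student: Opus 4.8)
The plan is to read the result off the full classification in Theorems \ref{timelike} and \ref{spacelike}, the only real work being to decide which of the listed models closes up to a genuine graph over \emph{all} of $\R^n$ --- in particular across the rotation axis $\{0\}$, where $\pi(x)=\|x\|$ fails to be smooth. First I would recall that, up to isometry, every $SO(n)$-invariant translating soliton is an open piece of one of the hypersurfaces $\Sigma_f$ built from a solution $w=f'$ of \eqref{yasesabe2}: either Cases I--III of Lemma \ref{moduloxmenor1} (spacelike, $|w|<1$), or the Ogival Paraboloid, Timelike Calyx and Spindle arising from Lemmata \ref{caseiv}--\ref{clasificacion-timelike} (timelike, $|w|>1$). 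Being \emph{entire and graphical} forces the profile $f$ to be defined on the whole ray $[0,\infty)$ and $u=f\circ\pi$ to be a $C^2$ function on all of $\R^n$, the origin included.

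Next I would discard the cases that cannot even be global graphs. The Spindle is a topological sphere, so it fails the vertical line test and is not a graph at all; the timelike Case VII and the piece used to assemble the Spindle (Case IV) blow up in finite time and are not globally defined. For the remaining globally defined models I would invoke the elementary criterion that $x\mapsto f(\|x\|)$ can be $C^2$ at the origin only if $f$ is even with $f'(0^+)=0$: already the first-order part requires $\lim_{s\to 0}w(s)=0$, since otherwise $f(\|x\|)\sim f(0)+f'(0^+)\|x\|$ carries a conical (here lightlike) singularity along the axis and is not even $C^1$ there.

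The decisive step is then the comparison of the behaviour as $s\to 0$. By Lemma \ref{moduloxmenor1}, Cases II and III satisfy $\lim_{s\to 0}w(s)=\pm1$, and by Lemmata \ref{caseiv} and \ref{clasificacion-timelike} every timelike model (Ogival Paraboloid, Timelike Calyx, Cases IV--VII) satisfies $\lim_{s\to 0}|w(s)|=1$; in all these cases $\Sigma_f$ is asymptotic to a lightlike cone at the axis and hence cannot be an entire $C^2$ graph. Only the Bowl of Lemma \ref{bowl} (Case I) has $w_B(0)=0$ with $f_B$ even and analytic, so that $u(x)=f_B(\|x\|)$ factors through $\|x\|^2$ and is analytic on all of $\R^n$; by Corollary \ref{global-solutions} (with $\tilde\ep\ep'=-1$ and $(f_B')^2<1$) it is globally defined, and it is spacelike. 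This exhibits the Bowl as an entire, $SO(n)$-invariant, graphical translating soliton, and shows it is the only one up to isometry.

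I expect the main obstacle to be precisely this axis analysis: making rigorous that a nonzero limit $\lim_{s\to 0}w(s)$ genuinely obstructs the $C^2$-extension of $f(\|x\|)$ across the origin (a true lightlike/conical singularity, not a removable one), and conversely that $w_B(0)=0$ together with the evenness and analyticity of $f_B$ supplied by Lemma \ref{bowl} is sufficient for smoothness there. Everything else is bookkeeping over the finite list of models delivered by the classification.
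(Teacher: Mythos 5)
Your proposal is correct and follows essentially the same route as the paper: the corollary is read off from the classification in Theorems \ref{timelike} and \ref{spacelike} together with the limits $\lim_{s\to 0}w(s)=\pm 1$ from Lemmata \ref{moduloxmenor1}, \ref{caseiv} and \ref{clasificacion-timelike}, which force every model except Case I to be asymptotic to a lightlike cone at the axis and hence not an entire $C^2$ graph, while the Bowl of Lemma \ref{bowl} extends analytically across the origin. The paper leaves these steps implicit; your write-up merely makes the axis-regularity argument explicit.
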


\begin{remark}\normalfont \textit{Timelike translating solitons do not satisfy a tangency principle.} Indeed, we consider two different spindles. At the points where the distance to the axis is maximal, the tangent planes contain directions parallel to the rotation axis. Due to the rotation action of $SO(n-1)$, the study of the relative position of the hypersurfaces will reduce to the study of the curves from Corollary \ref{oide}. We recall that the differential equation is
\[
\alpha''(y) = \left(-1 + \alpha'(y)^2\right) \left( \frac{n-1}{\alpha(y)} - \alpha'(y)\right), 
\]
with initial conditions $\alpha_i(y_0)=s_i$, $\alpha'(y_0)=0$, for $i=1,2$. We can assume $0<s_1<s_2$. Then, 
\[ \alpha_1''(y_o)=\frac{1-n}{s_1}<\frac{1-n}{s_2}=\alpha_2''(y_o)<0.
\]
To compare their graphs, we use $\beta=\alpha_1+\alpha_2(y_o)-\alpha_1(y_o)$. Clearly, $\beta'=\alpha_1'$ and $\beta''=\alpha_1''$. But now, $\beta(y_0)=\alpha_2(y_o)$, $\beta'(y_o)=0=\alpha_2'(y_o)$ and $\beta''(y_o)<\alpha_2''(y_o)$. Therefore,  $\beta(y)<\alpha_2(y)$ holds for any $y\neq y_o$  in a neighborhood of $y_o$. Geometrically, this is the same as moving one of the spindles in such a way that  their tangent planes coincide (parallel to the rotation axis), but one spindle is at one side of the other spindle. $\Box$   
\end{remark}

The following pictures were made with wxMaxima\copyright\ and  Gnuplot\copyright.
\begin{center}
\begin{longtabu}{ccc}  
\multicolumn{3}{c}{\textbf{Spacelike examples}} \\ \hline 
 && \\
	\includegraphics[height=3.7truecm]{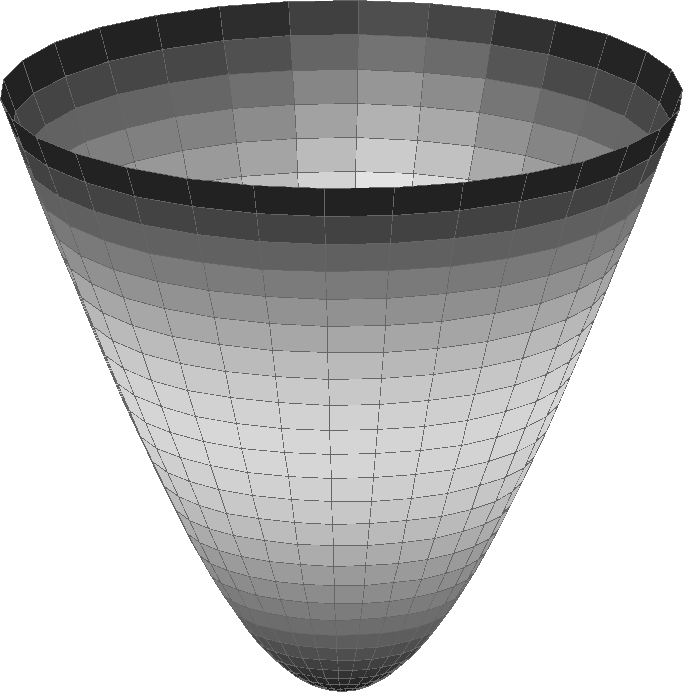} &
	\includegraphics[height=3.7truecm]{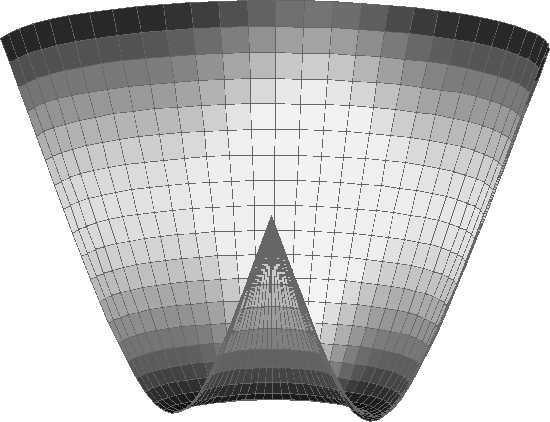}& 
	\includegraphics[height=3.7truecm]{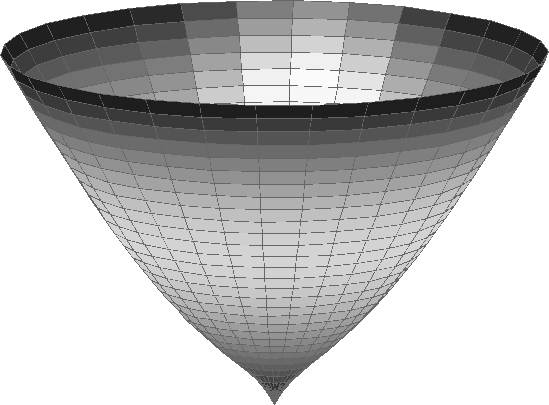} 
\\  
	The bowl (Case I.) & Case II.  
	& Case III.  
\\   
&&\\ 
\multicolumn{3}{c}{\textbf{Timelike examples}} \\ \hline 
&& \\
		\includegraphics[height=3.7truecm]{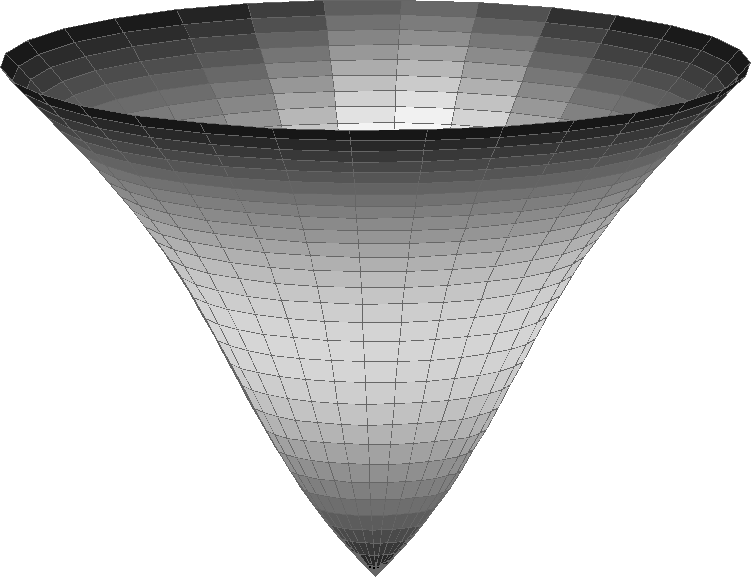}&
		\includegraphics[height=3.7truecm]{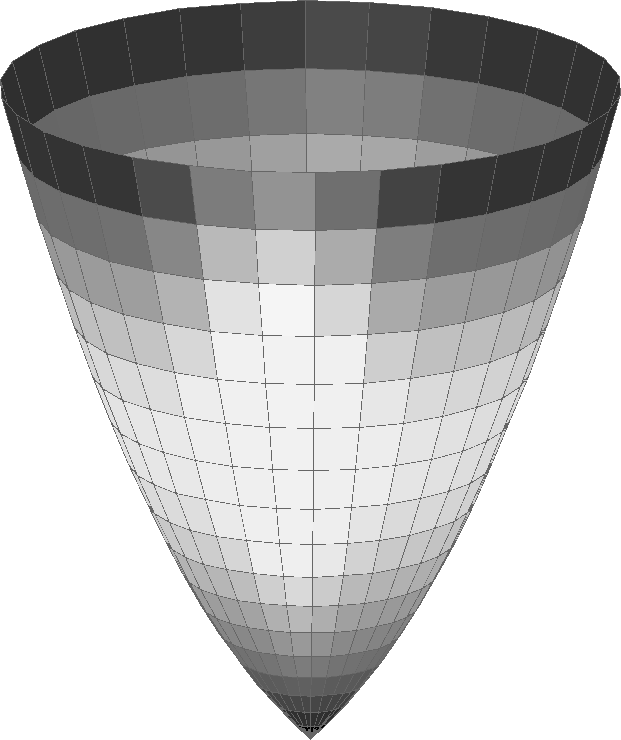}&
		\includegraphics[height=3.7truecm]{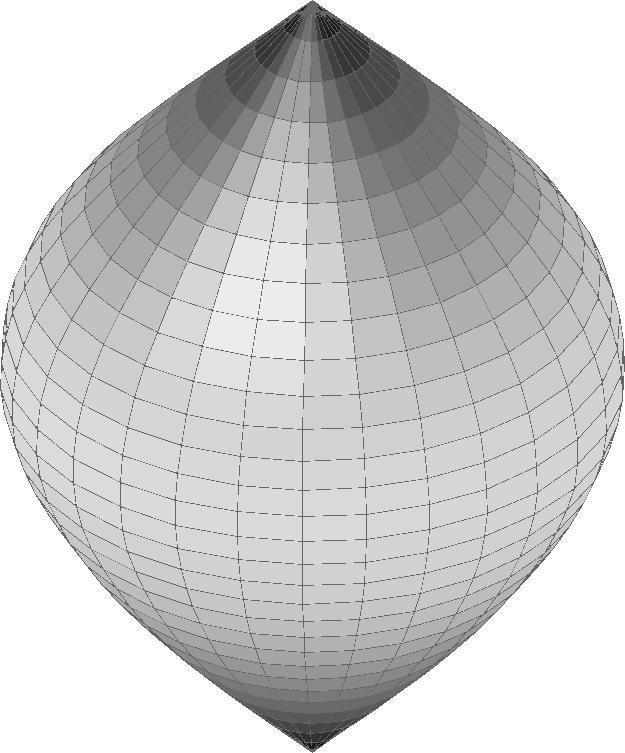} \\  
		A Timelike Calyx & The Ogival Paraboloid & A Spindle \\  \hline 
\end{longtabu}
\end{center}

\section{The action of the group $\SOf$}\label{SOf}

Let $\L^{n}$, $n\geq 2$, be the Minkowski (linear) space with the standard metric $g(X,Y)=X_1Y_1+\ldots+X_{n-1}Y_{n-1}-X_nY_n$, $X=(X_1,\ldots,X_n)^t, Y=(Y_1,\ldots,Y_n)^t\in\L^n$. We consider the  important  subgroup of linear isometries
\begin{align*}
 SO^{\uparrow}(n-1,1)  =& \left\{ 
A \in \mathcal{M}_n(\R) : \det(A)=+1, \ A^tI_{n-1,1}A=I_{n-1}, \right. \\
& \quad \left. A \mbox{ preserves the time orientation}\right\},
\end{align*}
where $I_{n-1,1}=\mathrm{Diagonal}(1,\ldots,1,-1)$, $A^t$ is the transpose of $A$. The \textit{light cone} $\mathcal{C}=\{p\in\L^n: g(p,p)=0\}$ can be split into the zero, the future part and the past part, $\mathcal{C}^{\uparrow}=\{p\in\mathcal{C} : p_n>0\}$, $\mathcal{C}^{\downarrow}=\{p\in\mathcal{C} : p_n<0\}.$ We  call $\mathbf{T}^{\uparrow}$  the open subset of all future pointing time-like vectors, whereas $\mathbf{T}^{\downarrow}$ is the open subset of all past pointing time-like vectors.  Let $\mathcal{S}$ be the open subset of all space-like vectors, which is connected when $n\geq 3$, and it has two connected components when $n=2$. Then, we have the disjoint union $\L^n = \{0\}\cup\mathcal{C}^{\uparrow}\cup \mathcal{C}^{\downarrow}\cup \mathbf{T}^{\uparrow}\cup \mathbf{T}^{\downarrow}\cup \mathcal{S}$. Each of them are invariant by $SO^{\uparrow}(n-1,1)$.  

Firstly, we study the case $n=2$. Take the Minkowski plane $\L^2$ with standard flat metric $g=dx^2-dy^2$. The \textit{Boost} Lie group is the set of matrices
\[ 
SO^{\uparrow}(1,1)=\mathbf{B} = \left\{ A_{\theta} = \begin{pmatrix} \cosh(\theta) & \sinh(\theta) \\
\sinh(\theta) & \cosh(\theta) \end{pmatrix} : \theta\in\R 
\right\}, 
\]
which acts on $\L^2$ by isometries. If we remove the origin $(0,0)$, the action is free, but it is not proper, because the quotient space $(\L^2\backslash\{0\})/\mathbf{B}$ is not Haussdorf. Indeed, the light cone projects to four points if $n=1$  which cannot be separated by neighbourhoods. We split the plane in four regions whose boundaries are made of two light-like geodesics, namely
\begin{align*}
\Omega_1 =  \{ (x,y)\in\L^2 : y^2<x^2, \ 0<x \}, & \quad 
\Omega_2 =  \{ (x,y)\in\L^2 : y^2>x^2, \ 0<y \}, \\
\Omega_3 =  \{ (x,y)\in\L^2 : y^2<x^2, \ 0>x \}, &  \quad 
\Omega_4 =  \{ (x,y)\in\L^2 : y^2>x^2, \ 0>y \}.
\end{align*}
We will use the globally defined orthonormal frame $\{\partial_x,\partial_y\}$. 
The action of $\mathbf{B}$  works very well on each $\Omega_k$, $k=1,2,3,4$, because they admit decompositions as in \eqref{equ:decom} (see below). According to \cite{BarrosCaballeroOrtega}, first we will obtain the \textit{fundamental} examples  included in the fundamental regions $\Omega_k\times\R\subset\L^3$, $k=1,2,3,4$. In $\Omega_1\cup\Omega_3$, the orbits of $\mathbf{B}$ are timelike. In $\Omega_2\cup\Omega_4$, the orbits are spacelike. When $n=2$, $\mathcal{S}=\Omega_1\cup\Omega_3$, $\mathbf{T}^{\uparrow}=\Omega_2$, $\mathbf{T}^{\downarrow}=\Omega_4$. 

We come back to the general case. To simplify computations and notations, we call $\chi:\Omega\subset \R^k\to \R^k$ the \textit{position vector}. Define the continuous map \[\pi:\L^n\to[0,+\infty), \quad  \pi(x)=\sqrt{\vert g(x,x)\vert}, \ \textrm{also}\ 
\pi=\sqrt{\vert g(\chi,\chi)\vert}.\] 
Outside the light cone, $\pi$ is smooth. Given $x\in \L^n\backslash\mathcal{C}$, if we put $\tilde{\ep}=\mathrm{sign}(g(x,x))=\pm 1$, 
\begin{equation}\label{projectionproperties}
\grad\pi(x)=\tilde{\ep}\frac{\chi(x)}{\pi(x)}, \quad 
\vert \grad \pi(x) \vert^2=\tilde{\ep}, \quad \mathrm{div}(\grad\pi )(x)= \frac{\tilde{\ep}}{\pi(x)}. 
\end{equation} 
This projection can be restricted to $\Omega_i$, $\mathcal{S}$, $\mathbf{T}^{\uparrow}$, $\mathbf{T}^{\downarrow}$, obtaining the necessary pseudo-Riemannian submersions onto $(0,+\infty)$. We explain the curves and diffeomorphisms of \eqref{equ:decom} in some cases, and the other ones are left to the reader.

$\star$ Decomposition of $\mathbf{T}^{\uparrow}$. 
The curve is $\beta:(0,+\infty)\to \mathbf{T}^{\uparrow}$, $\beta(s)=(0,\ldots,0,s)$, and the map is $\phi:(0,+\infty)\times SO(n-1,1) \to \mathcal{C}^{\uparrow}$, $\phi(s,A)=\beta(s)A$. Also, $\tilde{\ep}=-1$. 

$\star$ Decomposition of $\mathcal{S}$. The curve is now $\beta:(0,+\infty)\to\mathcal{S}$, $\beta(s)=(s,0,\ldots,0)$, and the map is $\phi:(0,+\infty)\times SO(n-1,1) \to \mathcal{C}^{\uparrow}$, $\phi(s,A)=\beta(s)A$. Also, $\tilde{\ep}=+1$. 
\begin{remark}  Given a domain (open and connected) $\Omega\subset \L^n\backslash\mathcal{C}$, such that any of the above decompositions make sense, then $\pi:\Omega\to (0,+\infty)$. By \eqref{projectionproperties}, function $h:(0,+\infty)\to\R$ is $h(s)=\tilde{\ep}/s$.  
\end{remark}

It is important to remember that we are studying $\L^{n+1}=\L^n\times\R$, so that $\ep'=+1$. Therefore, with our previous considerations, equation \eqref{keyODE} reduces to 
\begin{equation} \label{caseOmega1}
f''(s)=\big(\tilde{\ep}+f'(s)^2\big)\left(1-\frac{f'(s)}{s}\right). 
\end{equation}
where $\tilde{\ep}$ depends of the type of the orbits of $\textbf{B}$, i.e. of the region we are considering
\begin{examp}\label{ZWing} \normalfont Assume $\tilde{\ep}=1$, i.e. we consider solutions in the domain $S$. Then \eqref{keyODE} becomes
\begin{equation} \label{caseOmega1}
f''(s)=\big(1+f'(s)^2\big)\left(1-\frac{f'(s)}{s}\right). 
\end{equation}
This is the very same ODE as in the rotationally symmetric case in $\R^3$. By \cite{CSS}, there are two types of solutions. \\
$\bullet$ \textit{Type Z:} Firstly, we recall the only solution $f_1$ such that $f_1(0)=0$ and $f_1'(0)=0$. The traslating soliton will be the corresponding graph 
\[ \Phi:\Omega \to \Omega\times\R\subset \L^{n+1}, \quad \Phi(x)=\big(x,f(\pi(x))\big).\]
When $n=2$, there will be two twin surfaces constructed like this, one in $\Omega_1$ and one in $\Omega_3$. For $n\geq 3$, since $\Omega=\mathcal{S}$ is connected, there is just one hypersurface. 

\noindent $\bullet$ \textit{Wing-like:} Secondly, as in \cite{CSS}, (also, recall Corollary \ref{oide}), there is a family of unextendable, spacelike profile curves $\alpha:\R\to \{(x,y,t)\in \L^3  : y=0, x>0 \}$. By denoting the $3\times 3$ matrix $\hat{A}_{\theta}=\left(\begin{smallmatrix} A_{\theta} & 0 \\ 0 & 1 \end{smallmatrix}\right)$, 
the translating soliton is then
\[ \Phi: \R^2\to \L^3, \quad \Phi(\theta,s)=\alpha(s)\hat{A}_{\theta}. 
\]$\Box$
\end{examp}
\begin{examp}\label{tildeepisminusone} \normalfont 
For $\ep'=1$ and $\tilde{\ep}=-1$, \eqref{keyODE} becomes 
\begin{equation}\label{caseOmega2}
f''(s) = \big(-1+f'(s)^2\big)\left( 1 +\frac{f'(s)}{s}\right). 
\end{equation}
By the easy change $q(s)=-f(s)$, we transform this problem in
\[ q''(s) = \big(1-q'(s)^2\big)\left( 1 -\frac{q'(s)}{s}\right). 
\]
By Section \ref{soluODE}, there are $7$ types of solutions to this differential equation. Once we have a solution $(f=-q)$, we construct our profile curve and the associated translating soliton. As $f'=-q'$, the solutions producing timelike translating solitons are those satisfying $\vert f'(s)\vert <1$.  So, for each solution as in Lemma \ref{moduloxmenor1}, we construct a timelike translating soliton.
\end{examp}

\begin{examp} \textit{The Hybrid Translator:} \normalfont 
$\bullet$ Case $n=2$. We call $f_1$ the solution to \eqref{caseOmega1} in $\R$, and $f_2$ the solution to \eqref{caseOmega2} in $\R$ such that $f_i(0)=0$ and $f_i'(0)=0$, $i=1,2$. Recall that they are even functions and analytical.
As such, their derivatives of odd order are $f_i^{(2k-1)}(0)=0$, $k\geq 1$. 
As a result, $f_1(s)=\sum_{k=1}^{\infty} \frac{f_1^{(2k)}(0)}{(2k)!}s^{2k}$. Then, it makes sense to define $\hat{f}:\R\to\R$, 
$\hat{f}(s)=f_1(is)$, where $i=\sqrt{-1}$. By simple computations,
\[\hat{f}''(s) = 
- \big(1 - (\hat{f}'(s))^2 \big)\Big(1 + \frac{\hat{f}'(s)}{s}\Big).\]
Hence, $\hat{f}$ is a solution to equation \eqref{caseOmega2}, and consequently $\hat{f}=f_2$. By comparing the derivatives of $f_1$ and $f_2$,  
$f_1^{(k)}(0)=f_2^{(k)}(0)=0$, if $k$ is odd, since $f_1$ and $f_2$ are even, and $f_1^{(4k+2)}(0)=-f_2^{(4k+2)}(0)$ and $f_1^{(4k)}(0)=f_2^{(4k)}(0)$ for all $k\geq 0$. Next, we can define 
\[ u:\R^2\rightarrow \R, \quad u(x,y) = \left\{
\begin{array}{cl} f_1\left(\sqrt{x^2-y^2}\right),  & (x,y)\in \Omega_1\cup\Omega_3, \\
0,  & (x,y)\in\partial\Omega_i, i=1,2,3,4,\\
f_2\left(\sqrt{y^2-x^2}\right),  & (x,y)\in\Omega_2\cup\Omega_4.
\end{array}
\right.
\]
Note that we get immediately $u\in C^0(\R^2)\cap C^{\infty}(\bigcup_{k=1}^4\Omega_k)$.  We want to prove that $u\in C^{\infty}(\R^2)$. To do so, we now just need to prove the following
\begin{lemma}
Let $f_1$, $f_2$ be functions in $C^{2m}(\R)$, such that $f_1^{(k)}(0)=f_2^{(k)}(0)=0$, if $k$ is odd, $f_1^{(k)}(0)=(-1)^{\frac{k}{2}}f_2^{(k)}(0)$, if $k$ is even. Then the function $u$ defined as above is in $C^{m}(\R^2)$.
\end{lemma}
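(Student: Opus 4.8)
The plan is to reduce this two–variable statement to a one–variable gluing problem through the quadratic $P(x,y)=x^2-y^2$, which is smooth on all of $\R^2$. Observe that $\Omega_1\cup\Omega_3=\{P>0\}$, $\Omega_2\cup\Omega_4=\{P<0\}$, and the common boundary $\partial\Omega_i$ is $\{P=0\}$; on these three sets $u$ equals $f_1(\sqrt{P})$, $f_2(\sqrt{-P})$ and $0$ respectively. Thus if I can produce a single function $G\in C^m(\R)$ with $u=G\circ P$, then $u\in C^m(\R^2)$ follows at once, since the composition of a $C^m$ function with the polynomial $P$ is again $C^m$.

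First I would desingularise each even factor by the square root. Precisely, for $i=1,2$ set $g_i(t)=f_i(\sqrt{t})$ for $t\ge 0$. Since $f_i\in C^{2m}$ is even, its odd–order derivatives vanish at $0$, so after subtracting the degree-$2m$ Taylor polynomial $\sum_{k=0}^{m}\frac{f_i^{(2k)}(0)}{(2k)!}s^{2k}$ one is left with an even remainder $r_i\in C^{2m}$ all of whose derivatives up to order $2m$ vanish at $0$. The polynomial part becomes $\sum_{k}\frac{f_i^{(2k)}(0)}{(2k)!}t^{k}$, manifestly smooth in $t$, while the key point is that $\rho_i(t)=r_i(\sqrt{t})$ extends to a $C^m$ function at $t=0$. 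This square-root regularity is the heart of the argument, and I expect it to be the main obstacle: one checks by induction that for $t>0$ the derivative $\rho_i^{(k)}(t)$ is a finite combination of terms $r_i^{(j)}(\sqrt{t})\,t^{\,j/2-k}$ with $1\le j\le k$, and then uses the Peano estimate $r_i^{(j)}(s)=o(s^{2m-j})$ (valid because $r_i^{(j)}\in C^{2m-j}$ vanishes to order $2m-j$ at $0$) to obtain $\rho_i^{(k)}(t)=o(t^{\,m-k})\to 0$ for every $k\le m$. By the standard one–sided differentiability criterion this yields $g_i\in C^m([0,\infty))$ with $g_i^{(k)}(0)=\frac{k!}{(2k)!}f_i^{(2k)}(0)$.

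Next I would glue. Define $G(t)=g_1(t)$ for $t\ge 0$ and $G(t)=g_2(-t)$ for $t\le 0$; these agree at $0$ because the hypothesis with $k=0$ gives $f_1(0)=f_2(0)$. On each half–line $G$ is $C^m$ by the previous step, so it only remains to match the one–sided derivatives at $0$ up to order $m$. From the right $G^{(k)}(0^+)=g_1^{(k)}(0)$, and from the left $G^{(k)}(0^-)=(-1)^k g_2^{(k)}(0)$; inserting the values computed above, equality for all $k\le m$ is exactly $f_1^{(2k)}(0)=(-1)^k f_2^{(2k)}(0)$, which is the assumed relation $f_1^{(k)}(0)=(-1)^{k/2}f_2^{(k)}(0)$ written with even index $2k$. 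Hence $G\in C^m(\R)$.

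Finally, since $u=G\circ P$ with $G\in C^m(\R)$ and $P(x,y)=x^2-y^2$ smooth, I conclude $u\in C^m(\R^2)$. I emphasise that the only delicate point is the square-root step, namely that an even $C^{2m}$ function becomes $C^m$ as a function of its square; everything else is bookkeeping with the chain rule and the hypothesis on the even-order derivatives, and the loss from $C^{2m}$ to $C^m$ is precisely accounted for by the estimate $\rho_i^{(k)}(t)=o(t^{\,m-k})$.
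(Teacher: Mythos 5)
Your proof is correct, but it follows a genuinely different route from the paper. The paper argues by induction on $m$ directly in two variables: it writes $\partial_x\bigl(f_1(\sqrt{x^2-y^2})\bigr)=x\,g_1(\sqrt{x^2-y^2})$ with $g_i(z)=f_i'(z)/z$, checks that the pair $(g_1,g_2)$ inherits hypotheses of the same shape one order lower (using the Hadamard-quotient fact that $f_i'(z)/z\in C^{2m-2}$ because $f_i'(0)=0$), and applies the induction hypothesis to the glued function $v$ built from the $g_i$. You instead factor $u$ through the quadratic form $P(x,y)=x^2-y^2$ and reduce everything to a one-variable statement: a finite-regularity Whitney lemma asserting that a $C^{2m}$ function whose odd derivatives vanish at $0$ becomes a $C^m$ function of its square, proved via the Peano estimate $r^{(j)}(s)=o(s^{2m-j})$ on the Taylor remainder and the representation $\rho^{(k)}(t)=\sum_{j\le k}c_{jk}\,r^{(j)}(\sqrt{t})\,t^{j/2-k}$; the two halves are then matched at $t=0$ using $G^{(k)}(0^{\pm})$, where the hypothesis $f_1^{(2k)}(0)=(-1)^kf_2^{(2k)}(0)$ enters exactly once. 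I checked the key computations ($g_i^{(k)}(0)=\tfrac{k!}{(2k)!}f_i^{(2k)}(0)$, the $o(t^{m-k})$ decay, and the sign $(-1)^k$ from the left-hand branch) and they are sound. Your approach buys a cleaner separation of the analytic content (all the delicacy sits in the one-dimensional square-root lemma, with the loss from $C^{2m}$ to $C^m$ made explicit), avoids the sign bookkeeping that the paper's induction requires at each step, and extends verbatim to $n\ge 3$ since $\tilde{u}=G\circ(\|x\|^2-y^2)$ with the same $G$; the paper's induction is shorter on the page but hides the one-variable mechanism. Two cosmetic remarks: you invoke evenness of $f_i$, whereas the lemma only assumes the vanishing of odd-order derivatives at $0$ (which is all your argument actually uses); and, like the paper, you implicitly read the boundary value of $u$ as $f_1(0)=f_2(0)$ rather than the literal $0$ in the displayed definition.
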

\begin{proof}
We prove the statement by induction over $m$. The case $m=0$ is trivially satisfied, since $f_1(0)=f_2(0)$. Moreover let $g_1(z)=\frac{f'_1(z)}{z}$, $g_2(z)=\frac{f'_2(z)}{z}$. We have
\begin{gather*}
\partial_xf_1(\sqrt{x^2-y^2})=xg_1(\sqrt{x^2-y^2}),\quad \partial_xf_2(\sqrt{y^2-x^2})=-xg_2(\sqrt{y^2-x^2}), \\
\partial_yf_1(\sqrt{x^2-y^2})=-yg_1(\sqrt{x^2-y^2}),\quad \partial_yf_2(\sqrt{y^2-x^2})=yg_2(\sqrt{y^2-x^2}).
\end{gather*} 
Obviously $g_1, g_2\in C^{2m-2}$ since $f_1'(0)=f'_2(0)=0$, and $g_1^{(4k+2)}(0)=-g_2^{(4k+2)}(0)$ and $g_1^{(4k)}(0)=g_2^{(4k)}(0)$ for all $k\geq 0$. Then by the induction hypothesis, the function \[ v:\R^2\rightarrow \R, \quad v(x,y) = \left\{
\begin{array}{cl} g_1\left(\sqrt{x^2-y^2}\right),  & (x,y)\in \Omega_1\cup\Omega_3, \\
0,  & (x,y)\in\partial\Omega_i, i=1,2,3,4,\\
g_2\left(\sqrt{y^2-x^2}\right),  & (x,y)\in\Omega_2\cup\Omega_4.
\end{array}
\right.
\]
is in $C^{m-1}(\R^2)$. Hence $\partial_xu$ and $\partial_yu$ extend to $C^{m-1}(\R^2)$, and finally $u\in C^{m}(\R^2)$.
\end{proof}
We point out that the curve joining each two adjacent pieces is a lightlike straight line. Thus,  it is possible to consider two or three contiguous pieces, and their gluing straight lines. We cannot consider the other straight half-lines, since the boundary would be lightlike. In other words, we can choose to glue either two, three or four adjacent pieces, to obtain translating solitons. \\

$\bullet$ Case $n\geq 3$. $\L^{n}$ is isometric to the Lorentzian product $\mathbb{R}^{n-1}\times_{-1}\R$, $n\geq 3$, with the standard metric as at the beginning of this section. If we call $g_o$ the standard Riemannian metric on $\R^{n-1}$,  its norm is $\| x\|=\sqrt{g_o(x,x)}$, for any $x\in \R^{n-1}$. The function $u$ of the Hybrid Translator can be easily extended to $\L^n$ as follows:
\[ \tilde{u}:\R^{n-1}\times_{-1}\R=\L^{n}\to \L^{n+1}, \quad (x,y)\mapsto \tilde{u}(x,y):=u(\|x\|, y). 
\]
There is no confusion if we also call its graph the \textit{Hybrid Translator}. 

In addition, take $A\in SO^{\uparrow}(n-1,1)$, and decompose it as $A=\left(\begin{smallmatrix} B & d^t \\ c & a \end{smallmatrix}\right)$ for suitable $B\in\mathcal{M}_{n-1}(\R)$, $c,d\in\R^{n-1}$, $a\in \R$. With this, given $(x,y)\in \R^{n-1}\times_{-1}\R=\L^{n}$, taking $(z,t)=(z,y)A$, then $\|z\|^2-t^2=\|x\|^2-y^2$. This means that $\tilde{u}$ is invariant by $SO^{\uparrow}(n-1,1)$. 

Sprunk and Xiao, \cite{SX}, proved that any entire translating soliton in $\R^3$ must be convex, but this is not the case for our hybrid example, because $f_1f_2<0$. $\Box$
\end{examp}
\begin{theorem} Let $M$ be a $SO^{\uparrow}(n-1,1)$ invariant, timelike, translating soliton in $\L^{n+1}$, $n\geq 2$. Then, up to isometries, $M$ is an open subset of one of the following examples:
\begin{enumerate}
\item A translating soliton of type $Z$ or \textit{Wing-like}.
\item Given $w$ any of the three types of solutions in Lemma \ref{moduloxmenor1}, take $f=-\int w$. 
\item The Hybrid Translator.
\end{enumerate}
\end{theorem}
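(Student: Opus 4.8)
The plan is to classify $M$ by reducing to the ODE analysis of Section~\ref{soluODE} region by region, and then to understand how the regional pieces can be assembled into one connected hypersurface across the light cone. Throughout I may assume $M$ is connected (otherwise argue componentwise), and I will freely use an ambient time-orientation reversing isometry to interchange $\mathbf{T}^{\uparrow}$ and $\mathbf{T}^{\downarrow}$, so that only $\mathcal{S}$ and $\mathbf{T}^{\uparrow}$ need separate treatment.

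First I would exploit invariance. Since $M$ is $\SOf$-invariant, its projection into $\L^n$ is a union of orbits; removing the (singular) part lying over the light cone $\mathcal{C}$, the remainder splits into connected pieces projecting into the regular regions $\mathcal{S}$, $\mathbf{T}^{\uparrow}$, $\mathbf{T}^{\downarrow}$, on each of which $\SOf$ acts with cohomogeneity one and $\pi=\sqrt{|g(\chi,\chi)|}$ is the pseudo-Riemannian submersion of \eqref{projectionproperties}, with $h(s)=\tilde{\ep}/s$. On each such piece Theorem~\ref{equiv_ODE} (and Corollary~\ref{oide}, via inverse functions, for the non-graphical wing-like portions) shows the piece is, up to reparametrization, the graph of $u=f\circ\pi$ for a solution $f$ of \eqref{keyODE}, which becomes \eqref{keyODE} with $\tilde{\ep}=+1$ over $\mathcal{S}$ and \eqref{caseOmega2} over $\mathbf{T}^{\uparrow}$.

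Second I would impose the timelike condition $\ep=\mathrm{sign}(\ep'+\tilde{\ep}(f')^2)=+1$, recalling $\ep'=+1$. Over $\mathcal{S}$ ($\tilde{\ep}=+1$) this is automatic, and Example~\ref{ZWing} identifies the admissible solutions as exactly the type~$Z$ soliton and the wing-like family, yielding item~(1). Over $\mathbf{T}^{\uparrow}$ ($\tilde{\ep}=-1$) one has $\ep=\mathrm{sign}(1-(f')^2)$, so timelike forces $|f'|<1$; the substitution $q=-f$ of Example~\ref{tildeepisminusone} turns \eqref{caseOmega2} into \eqref{unordenmenos} with $|w|=|q'|<1$, and Lemma~\ref{moduloxmenor1} lists precisely the three admissible solutions (Cases~I--III). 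Writing $f=-\int w$ gives item~(2). Hence every single-region piece already lies in one of the listed families, and any soliton whose projection stays inside one region is classified.

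The remaining, and hardest, step is the global assembly when $M$ is connected but crosses the light cone, so that its projection meets more than one region. Here I would analyse the boundary behaviour as $s=\pi\to 0$ recorded in Lemmas~\ref{moduloxmenor1}, \ref{caseiv}, \ref{clasificacion-timelike}: the possible limits of $f'$ at the cone are only $0$ or $\pm1$. A lightlike limit $f'\to\pm1$ produces a profile asymptotic to a null line, so the corresponding face of $M$ becomes tangent to the cone directions and cannot be continued as a smooth, non-degenerate hypersurface to the other side; this discards every case except $f'\to0$ on both sides, i.e.\ the type~$Z$ solution $f_1$ on the spacelike side and the bowl Case~I solution $f_2$ on the timelike side. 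For these even, analytic functions one has $f_2(s)=f_1(is)$, whence $f_1^{(4k)}(0)=f_2^{(4k)}(0)$ and $f_1^{(4k+2)}(0)=-f_2^{(4k+2)}(0)$, and the smoothness lemma proved inside the Hybrid Translator example then yields $u\in C^{\infty}$ across $\mathcal{C}$; invariance extends this to $\tilde{u}$ for all $n\ge2$, identifying $M$ with an open subset of the Hybrid Translator and giving item~(3). The main obstacle I anticipate is precisely this cross-cone analysis: proving rigorously that null-asymptotic profiles cannot be matched smoothly while the $f'\to0$ profiles match to infinite order, so that the Hybrid Translator is the unique soliton straddling the cone. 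By contrast, the regional reduction and the timelike selection are comparatively routine, resting directly on Theorem~\ref{equiv_ODE}, Corollary~\ref{oide}, and the classifications of Section~\ref{soluODE}.
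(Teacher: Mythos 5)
Your proposal is correct and follows essentially the same route as the paper: reduce region by region ($\mathcal{S}$, $\mathbf{T}^{\uparrow}$, $\mathbf{T}^{\downarrow}$) via Theorem \ref{equiv_ODE} to the ODEs of Section \ref{soluODE}, select the timelike solutions ($|f'|<1$ when $\tilde{\ep}=-1$, automatic when $\tilde{\ep}=+1$), and then show that the only way to cross the light cone is the Hybrid Translator. The only cosmetic difference is in that last step: you argue via the possible limits $0,\pm1$ of $f'$ at the cone, whereas the paper argues that $\SOf$-invariance forces the tangent plane over the origin to be $\{x_{n+1}=0\}$, hence the derivatives to vanish --- both yield the same matching condition.
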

\begin{proof} We will use Theorem \ref{equiv_ODE}. As we regard $\L^{n+1}=\L^n\times\R$, we need $\ep'=+1$.  Recall $\L^n=\mathcal{C}\cup\mathcal{S}\cup\mathbf{T}^{\uparrow}\cup\mathcal{C}^{\downarrow}$, and each subset is invariant by the action of $SO^{\uparrow}(n-1,1)$ by isometries. Only on $\mathcal{S}\cup\mathbf{T}^{\uparrow}\cup\mathcal{C}^{\downarrow}$ the action is proper, and we can obtain  decompositions as in \eqref{equ:decom}. The value of $\tilde{\ep}$ depends on the chosen domain. The orbits  of the action are well-known. First, one finds the  lightcone. For $n=2$, they are (Euclidean) hyperbolas. For $n\geq 3$, they are hypersurfaces with non-zero constant sectional curvature (namely, real hyperbolic spaces for any $n\geq 3$ and Anti-de Sitter spaces for $n\geq 4$).

$\bullet$ For $\mathbf{T}^{\uparrow}$ (resp. $\mathbf{T}^{\downarrow}$), the curve $\beta:(0,+\infty)\to \mathbf{T}^{\uparrow}$, $\beta(s)=(0,\ldots,0,s)$ is timelike (resp. $\beta(s)=(0,\ldots,0,-s)$). Then, $\tilde{\ep}=-1$ and therefore we recall Example \ref{tildeepisminusone}, obtaining another 3 timelike translating solitons. 

$\bullet$ For  $\Omega=\mathcal{S}$, the curve $\beta:(0,+\infty)\to\mathcal{S}$, $\beta(s)=(s,0,\ldots,0)$ is spacelike. Then, $\tilde{\ep}=+1$. The differential equation  \eqref{keyODE} becomes
\begin{equation}\label{caseCsomething}
f''(s) = \big(1+f'(s)^2\big)\left( 1 -\frac{f'(s)}{s}\right). 
\end{equation}
Now, we recall Examples \ref{ZWing}. One should bear in mind that for $n=2$, there will two twin surfaces, in $\Omega_1\times\R$ and in $\Omega_3\times\R$, since $\mathcal{S}$ is not connected. 

$\bullet$ It remains to study whether the solutions \textit{touching} the lightcone can be extended smoothly. Since the lightcone  has a degenerate metric, it has to be strictly contained in the hypersurface. In other words, we wonder if it is possible to \textit{glue} two solutions, one in $\mathbf{T}^{\uparrow}$ and one in $\mathcal{S}$ (alternatively, in $\mathbf{T}^{\downarrow}$). By regarding the extended action of $SO^{\uparrow}(n-1,1)$ on $\L^{n+1}$, if any, the tangent plane at the origing must be $\{x\in\L^{n+1}=\L^n\times \R : x_{n+1}=0\}$. This means that the derivatives of the functions must be zero. Then, we finish the proof by recalling the Hybrid Translator. 
\end{proof}

By another method, in \cite{coreano}, the author obtained (essentially) the three \textit{spacelike} types, but not the timelike cases.  Again, we also recover the following result from \cite{coreano}. 
\begin{theorem} Up to isometries, any $SO^{\uparrow}(n-1,1)$ invariant, spacelike, translating soliton in $\L^{n+1}$, $n\geq 2$, is an open subset of a hypersurface generated by the graph of any of Cases V, VI or VII in Lemma \ref{clasificacion-timelike}. 
\end{theorem}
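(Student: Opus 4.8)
The plan is to follow the scheme of the preceding theorem on timelike solitons, but now keeping track of the sign $\ep$ that determines the causal character of the graph. By Theorem \ref{equiv_ODE}, every $\SOf$-invariant soliton arises from a solution $f$ of \eqref{keyODE}, and since we regard $\L^{n+1}=\L^n\times\R$ we have $\ep'=+1$. The graph is spacelike precisely when $\ep=\mathrm{sign}\big(1+\tilde{\ep}(f')^2\big)=-1$. This sign can never be negative when $\tilde{\ep}=+1$, so the domain $\mathcal{S}$ of spacelike vectors, on which the orbits give $\tilde{\ep}=+1$, supports no spacelike soliton; the light cone $\mathcal{C}$ is excluded from the start because its induced metric is degenerate. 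Hence my first step is to confine the discussion to $\mathbf{T}^{\uparrow}$ and $\mathbf{T}^{\downarrow}$, where the orbits are timelike, $\tilde{\ep}=-1$, and the spacelike condition reduces to the single inequality $(f')^2>1$.

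On $\mathbf{T}^{\uparrow}$, and by an ambient time-reversing isometry also on $\mathbf{T}^{\downarrow}$, the orbits have mean curvature $h(s)=-1/s$, so \eqref{keyODE} is exactly equation \eqref{caseOmega2} of Example \ref{tildeepisminusone}. Following that example I would set $q=-f$, turning it into \eqref{yasesabe}, and write $w=q'=-f'$, so that the spacelike requirement $(f')^2>1$ becomes $|w|>1$. I then invoke the full classification of the unextendable solutions with $|w|>1$: Case IV of Lemma \ref{caseiv} (where $w<-1$) together with Cases V, VI and VII of Lemma \ref{clasificacion-timelike} (where $w>1$). Cases V and VI are globally defined on $(0,+\infty)$ and produce two complete graphical solitons, while Case VII has a finite-time blow-up.

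To reduce this list to the three families asserted, I would apply the reparametrization argument of Corollary \ref{oide} exactly as in the spindle example: at the blow-up the profile curve acquires a vertical tangent, and passing to the inverse function $\alpha$ of $q$ one checks that $\alpha$ has a genuine critical point at a finite parameter and splits there into two branches, one of Case VII type ($w>1$) and one of Case IV type ($w<-1$). Thus a Case IV solution is never an independent surface but only the analytic continuation of a Case VII solution past its turning point, so the maximal $\SOf$-invariant hypersurface built from Case VII already incorporates the Case IV piece. It follows that, up to isometry, there are exactly three maximal spacelike examples, generated by Cases V, VI and the glued Case VII, and any spacelike soliton is an open subset of one of these.

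The step I expect to be the real obstacle is excluding a spacelike analogue of the Hybrid Translator, that is, showing that a spacelike soliton cannot be continued smoothly across the light cone. The mechanism that produced the hybrid in the timelike case breaks down here: by Lemma \ref{clasificacion-timelike} every solution with $|w|>1$ satisfies $\lim_{s\to 0}w(s)=1$, hence $|f'(s)|\to 1$ as $s\to 0$, so the tangent direction of the profile becomes lightlike and the hypersurface is merely asymptotic to a light cone rather than crossing it. Since any continuation into $\mathcal{S}$ would moreover be forced to be timelike, a spacelike soliton cannot escape $\mathbf{T}^{\uparrow}\cup\mathbf{T}^{\downarrow}$, and the classification is thereby closed.
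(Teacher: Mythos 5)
Your argument is correct and follows essentially the route the paper intends: the paper states this theorem without a separate proof, relying on the preceding timelike classification, and your reduction to $\mathbf{T}^{\uparrow}\cup\mathbf{T}^{\downarrow}$ (since $\tilde\ep=+1$ on $\mathcal{S}$ forces $\ep=+1$), the substitution $q=-f$ leading to the solutions with $|w|>1$ (Cases IV--VII), and the absorption of Case IV into Case VII by the wing-like gluing are exactly the points made in Example \ref{tildeepisminusone} and in the remark following the theorem. Your exclusion of a spacelike analogue of the hybrid translator across the light cone is also sound and supplies a detail the paper leaves implicit.
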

\begin{remark}\normalfont As in the proof of Theorem \ref{timelike}, Type IV from Lemma \ref{caseiv} and Type VII can be joined to provide a single profile curve. 
\end{remark}
\begin{corollary} The only $SO^{\uparrow}(n-1,1)$-invariant, entire translating soliton in $\L^{n+1}$, $n\geq 2$, is the hybrid translator. 
\end{corollary}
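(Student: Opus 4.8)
The plan is to feed the two classification theorems above into an extension analysis, checking which of the listed model profiles can be prolonged to a graph over the whole of $\L^n$.

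First I would reduce to the timelike case. An entire soliton is a non-degenerate graph $F(x)=(x,u(x))$ over the connected base $\L^n$, so its sign $\ep=\mathrm{sign}(\ep'+\vert\grad u\vert_g^2)$, being continuous and $\{\pm1\}$-valued, is constant. On the spacelike region $\mathcal{S}$ one has $\tilde{\ep}=+1$, whence by \eqref{projectionproperties} the gradient $\grad u=(f'\circ\pi)\grad\pi$ is spacelike and $\ep=\mathrm{sign}(1+(f')^2)=+1$. Therefore any entire soliton is timelike, and in particular the spacelike models of Cases V, VI, VII cannot be entire: they are graphs only over $\mathbf{T}^{\uparrow}$ (or $\mathbf{T}^{\downarrow}$) and cannot be continued over $\mathcal{S}$.

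Next I would apply the timelike classification region by region. On each open piece $\mathcal{S}$, $\mathbf{T}^{\uparrow}$, $\mathbf{T}^{\downarrow}$ the invariant function $u$ is of the form $f\circ\pi$ with $f$ solving \eqref{caseOmega1} on $\mathcal{S}$ and \eqref{caseOmega2} on $\mathbf{T}^{\uparrow}\cup\mathbf{T}^{\downarrow}$. The decisive point is that the graph must stay smooth where $\pi\to 0$, i.e. across the light cone $\mathcal{C}$ and through the origin. I would show this selects, on each region, the unique model reaching $\pi=0$ with vanishing slope. On $\mathcal{S}$ it discards the wing-like solitons (not globally graphical) and leaves the even analytic Type Z profile $f_1$ of Lemma \ref{bowl}. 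On $\mathbf{T}^{\uparrow}$, among the admissible $f=-\int w$ with $w$ from Cases I, II, III of Lemma \ref{moduloxmenor1}, only Case I has $\lim_{s\to 0}w=0$; Cases II and III have $\lim_{s\to 0}w=\pm 1$, so $|f'|\to 1$ and the graph becomes lightlike exactly at the cone, obstructing any smooth prolongation. Hence on both timelike cones the forced profile is $f_2=-f_B$, and by uniqueness of the initial value problem the two choices on $\mathbf{T}^{\uparrow}$ and $\mathbf{T}^{\downarrow}$ agree.

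Finally I would assemble the gluing. Continuity at the origin forces $f_1(0)=f_2(0)$; after a vertical translation, which is an ambient isometry, I normalize $f_1(0)=f_2(0)=0$, recovering exactly the data $f_i(0)=f_i'(0)=0$ used to build the Hybrid Translator. With these normalizations both profiles are uniquely determined, and the parity and derivative-matching lemma of the Hybrid Translator example shows that the pieces fit together into a $C^{\infty}$ function on all of $\L^n$ (for $n\ge 3$ after the radial extension $\tilde u(x,y)=u(\|x\|,y)$). The resulting object is precisely the Hybrid Translator; since it is conversely entire and $SO^{\uparrow}(n-1,1)$-invariant, it is the unique entire example up to isometry. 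The step I expect to be the main obstacle is the smoothness across the degenerate light cone: one must verify both that the degenerate models (Cases II, III and the wing-like profiles) genuinely fail to extend, because the graph is lightlike exactly where the regions meet, and that the two surviving bowl-type profiles, although they solve ODEs with opposite sign of $\tilde{\ep}$, share matching even Taylor coefficients and therefore glue to a $C^{\infty}$ hypersurface. This is precisely where the identity $\hat f(s)=f_1(is)$ and the induction lemma of the Hybrid Translator example carry the argument.
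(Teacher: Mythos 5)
Your argument is correct and follows essentially the same route as the paper (whose proof of this corollary is left implicit in the preceding theorem): combine the two classification theorems, note that any entire graph is automatically timelike over $\mathcal{S}$ since there $\ep=\mathrm{sign}(1+(f')^2)=+1$, discard every profile whose slope satisfies $|f'|\to 1$ or blows up at the light cone because the induced metric degenerates there, and glue the two surviving bowl-type profiles by the smoothness lemma of the Hybrid Translator example. The only cosmetic imprecision is the appeal to ``uniqueness of the initial value problem'' at $s=0$, where the ODE is singular; what is actually used is the uniqueness of the boundary problem $w(0)=0$ established in Lemma \ref{bowl}.
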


\section*{Acknowledgements}

The second author has been partially financed by the Spanish Ministry of Economy and Competitiveness and European Regional Development Fund (ERDF), project MTM2016-78807-C2-1-P.  The second author also belongs to the Excellence Scientific Unit 'Science in the Alhambra' of Granada University, ref. UCE-PP2018-01. 
Both authors would like to thank the London Mathematical Society and Imperial College London, since this was partially supported by a Scheme 4 Grant of the LMS.

\end{document}